
\documentclass[a4paper,12pt]{article}
\usepackage{latexsym}	
\usepackage{amsmath}
\usepackage{amsthm}		

\usepackage{graphicx}
\usepackage{txfonts}
\usepackage{bm}
\usepackage{color}
\usepackage{epic,eepic}

\newcommand{\RED}[1]{{\color{red}#1}} 
 \renewcommand{\RED}[1]{{#1}} 
\newcommand{\BLU}[1]{{\color{blue}#1}} 
 \renewcommand{\BLU}[1]{{#1}} 

\usepackage{geometry}
\geometry{left=27mm,right=27mm,top=27mm,bottom=27mm}

\numberwithin{equation}{section}

\newtheorem{theorem}{Theorem}[section]
\newtheorem{proposition}[theorem]{Proposition}
\newtheorem{lemma}[theorem]{Lemma}

\newtheorem{remark}{Remark}[section]
\newtheorem{example}{Example}[section]
\newtheorem{claim}[theorem]{Claim}

\newcommand{\OMIT}[1]{{\bf [OMIT:} #1 \ {\bf --- end OMIT] }}  
   \renewcommand{\OMIT}[1]{}            



\newcommand{\RR}{{\mathbb{R}}}
\newcommand{\ZZ}{{\mathbb{Z}}}

\newcommand{\Rinf}{\RR \cup \{ +\infty \}}

\newcommand{\vecone}{{\bf 1}}
\newcommand{\veczero}{{\bf 0}}
\newcommand{\dom}{{\rm dom\,}}
\newcommand{\supp}{\mbox{\rm supp\,}}

\newcommand{\unitvec}[1]{\bm{1}\sp{#1}}
\newcommand{\argmax}{\arg \max}
\newcommand{\argmin}{\arg \min}

\newcommand{\decmin}{{\rm decmin}}

\newcommand{\MM}{{M$_{2}$}}



\newcommand{\finbox}{\hspace*{\fill}$\rule{0.2cm}{0.2cm}$}
\newcommand{\todaye}{\the\year/\the\month/\the\day}

\begin{document}

\title{
Two Proofs of a Structural Theorem of 
\\  Decreasing Minimization on Integrally Convex Sets 
}

\author{
Kazuo Murota%
\thanks{
The Institute of Statistical Mathematics,
Tokyo 190-8562, Japan; 
and
Faculty of Economics and Business Administration,
Tokyo Metropolitan University, 
Tokyo 192-0397, Japan,
murota@tmu.ac.jp}
\ and 
Akihisa Tamura%
\thanks{Department of Mathematics, Keio University, 
Yokohama 223-8522, Japan,
aki-tamura@math.keio.ac.jp}
}

\date{January 2025 / April 2025}

\maketitle

\begin{abstract}
This paper gives two different proofs to 
a structural theorem of decreasing minimization 
(lexicographic optimization)
on integrally convex sets. 
The theorem states that
the set of decreasingly minimal elements of an integrally convex set
can be represented as the intersection of a unit discrete cube and a face
of the convex hull of the given integrally convex set.
The first proof resorts to 
the Fenchel-type duality theorem in discrete convex analysis
and the second is more elementary using Farkas' lemma.
\end{abstract}

{\bf Keywords}:
discrete optimization, 
discrete convex analysis,  
integrally convex set,
decreasing minimization,
lexicographic optimization,
Fenchel-type duality






\section{Introduction}
\label{SCintro}

This paper is concerned with 
decreasing minimization 
(or lexicographic optimization)
on discrete convex sets.
For any vector 
$x\in \RR\sp{n}$, 
let $x{\downarrow}$ denote
the vector obtained from $x$ by rearranging its components 
in descending order, i.e., 
$x{\downarrow} = (x{\downarrow}_{1}, x{\downarrow}_{2}, \ldots , x{\downarrow}_{n})$
with
$x{\downarrow}_{1} \ge x{\downarrow}_{2} \ge \cdots \ge x{\downarrow}_{n}$,
where
$x{\downarrow}_{i}$ denotes the $i$th component of $x{\downarrow}$. 
For $x = (2,5,2,1,3)$,
for example, we have
$x{\downarrow}=(5,3,2,2,1)$. 
For any vectors $x$ and $y$ of the same dimension,
we compare $x{\downarrow}$
and $y{\downarrow}$ lexicographically to
define notations $x <_{\rm dec} y$ and $x\leq_{\rm dec} y$
as follows:
\begin{itemize}
\item
$x <_{\rm dec} y$: \ 
$x{\downarrow} \ne y{\downarrow}$, and
$x{\downarrow}_{i}<y{\downarrow}_{i}$
for the smallest $i$ with
$x{\downarrow}_{i} \ne y{\downarrow}_{i}$.

\item
$x\leq_{\rm dec} y$: \ 
$x{\downarrow} = y{\downarrow}$ or $x <_{\rm dec} y$.
\end{itemize}
\noindent
For
$x = (2,5,2,1,3)$ and $y =(1,5,2,4,1)$,
for example, we have
$x{\downarrow}=(5,3,2,2,1)$ and $y{\downarrow}=(5,4,2,1,1)$.
Since
$x{\downarrow}_{1} = y{\downarrow}_{1}$ and
$x{\downarrow}_{2} < y{\downarrow}_{2}$,
we have
$x <_{\rm dec} y$.
For a given set $S$ of vectors,
an element $x$ of $S$ is called 
{\em decreasingly minimal}
(or {\em dec-min})
in $S$ if it is minimal in $S$ with respect to 
$\leq_{\rm dec}$, that is, if 
$x \leq_{\rm dec} y$ for all $y\in S$.
In general, dec-min elements may not exist 
(e.g., $S = \ZZ\sp{n}$)
and are not uniquely determined
(e.g., $S = \{ (1,2), (2,1) \}$).
We denote 
the set of all dec-min elements of $S$ by $\decmin(S)$.
The problem of finding a dec-min element of a given set $S$
is called the
{\em decreasing minimization problem}
on $S$.

Decreasing minimization on a base polyhedron
(in continuous variables)
was investigated in depth by Fujishige \cite{Fuj80,Fuj05book}. 
For discrete variables,
Frank and Murota investigated, 
in a series of papers
\cite{FM19partII,FM22partA,FM22partB,FM22fairsbmflow,FM23decminRZ,FM23fairflow},
decreasing minimization for various types of discrete convex sets
such as M-convex sets, integer flows, and
integer submodular flows, 
where an M-convex set
\cite{Mdcasiam} 
is a synonym for the set of integer points
in an integral base polyhedron. 

The following theorem reveals  
a matroidal structure of the set of dec-min elements of an M-convex set.

\begin{theorem}[{\cite[Theorem~5.7]{FM22partA}}]     \label{THdecminMunit}
An M-convex set $S$ has a dec-min element.
The set of dec-min elements of $S$ can be represented as 
\[
\decmin(S) = \{ z + \unitvec{X} \mid X \in \mathcal{B} \}
\]
with an integer vector $z$ and a matroid basis family $\mathcal{B}$.
In particular, $\decmin(S)$ is an M-convex set. 
\end{theorem}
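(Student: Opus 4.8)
The plan is to identify the dec-min elements of $S$ with the minimizers of the separable convex ``square-sum'' function $\Phi(x)=\sum_{i} x_{i}^{2}$ on $S$, and then to extract the matroidal structure from the exchange axiom. Since an M-convex set is the set of integer points of an integral base polyhedron and every coordinate of such a polyhedron is bounded, $S$ is a finite nonempty set; as $\leq_{\rm dec}$ is a total preorder, a dec-min element exists. I would first record the single-exchange identity
\[
\Phi(x-\unitvec{\{i\}}+\unitvec{\{j\}})-\Phi(x)=2\,(x_{j}-x_{i}+1),
\]
which is negative exactly when $x_{i}\ge x_{j}+2$. Such an exchange, when it stays in $S$, strictly decreases $\Phi$ and also strictly decreases the sorted vector in the $\leq_{\rm dec}$ order, since it transfers one unit from a strictly larger to a strictly smaller coordinate while preserving their order. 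Calling $x\in S$ \emph{stable} when no exchange $x-\unitvec{\{i\}}+\unitvec{\{j\}}\in S$ has $x_{i}\ge x_{j}+2$, this immediately gives that every dec-min element is stable; and by the local-equals-global optimality for M-convex function minimization (a standard fact of discrete convex analysis, applied to $\Phi$ restricted to $S$), the stable elements are exactly $M:=\argmin_{S}\Phi$. Hence $\decmin(S)\subseteq M$.

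Next I would prove that all of $M$ lies in a single unit cube. Suppose $x,y\in M$ with $x_{k}\ge y_{k}+2$ for some $k$. As $k\in\suppp(x-y)$, the symmetric exchange axiom yields $l\in\suppm(x-y)$ with both $x'=x-\unitvec{\{k\}}+\unitvec{\{l\}}\in S$ and $y'=y+\unitvec{\{k\}}-\unitvec{\{l\}}\in S$. Summing the two increments and using $x_{k}\ge y_{k}+1$ and $y_{l}\ge x_{l}+1$ gives
\[
[\Phi(x')-\Phi(x)]+[\Phi(y')-\Phi(y)]=2\bigl[(x_{l}+y_{k})-(x_{k}+y_{l})+2\bigr]\le 0 .
\]
Since $x$ and $y$ both minimize $\Phi$, each bracket on the left is nonnegative, so the sum is $0$ and both exchange inequalities are tight; in particular $x_{k}=y_{k}+1$, contradicting $x_{k}\ge y_{k}+2$. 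Thus $|x_{k}-y_{k}|\le 1$ for all $k$ and all $x,y\in M$, so setting $z_{i}:=\min_{x\in M}x_{i}$ we obtain $M\subseteq z+\{0,1\}^{n}$.

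Because $\Phi$ is separable convex, its restriction to $S$ is an M-convex function and $M$ is an M-convex set; an M-convex set contained in the translated $0$--$1$ cube $z+\{0,1\}^{n}$ is precisely $\{z+\unitvec{X}\mid X\in\mathcal{B}\}$ for the basis family $\mathcal{B}$ of a matroid, because the M-convex exchange axiom restricted to $0$--$1$ vectors is the matroid basis-exchange axiom. This yields both the asserted representation and the M-convexity of $M$. It remains to upgrade $\decmin(S)\subseteq M$ to equality. For $x=z+\unitvec{X}\in M$ one has $\Phi(x)=\Phi(z)+2\sum_{i\in X}z_{i}+|X|$; since every member of $M$ shares the same $\Phi$-value and $|X|$ is constant over the bases of a matroid, $\sum_{i\in X}z_{i}$ is constant over $X\in\mathcal{B}$, which forces $z$ to be constant on each connected component of $\mathcal{B}$. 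As each basis meets every component in a fixed number of elements, all members of $M$ then share one sorted vector $w$. Finally, any $x^{*}\in\decmin(S)\subseteq M$ has $x^{*}{\downarrow}=w$ and is $\leq_{\rm dec}$-minimum in $S$, so every member of $M$ has sorted vector $w$ and is likewise dec-min; hence $\decmin(S)=M=\{z+\unitvec{X}\mid X\in\mathcal{B}\}$.

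The main obstacle is precisely the reverse inclusion $M\subseteq\decmin(S)$, i.e.\ that square-sum minimality implies dec-minimality: it is not enough that $M$ has constant $\Phi$-value, one must first secure the unit-cube containment so that the constant-$\Phi$ condition can be converted into a common sorted profile. The exchange-and-telescoping computation of the second paragraph is what drives this, and obtaining the exact tightness (rather than a mere bound on the coordinatewise spread) is where the real care lies.
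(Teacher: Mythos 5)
Your route is genuinely different from the paper's. The paper obtains Theorem~\ref{THdecminMunit} (see Remark~\ref{RMproofMM2}) by specializing its Fenchel-duality argument: dec-minimality is converted into minimization of a rapidly increasing function $\Phi_{\rm rap}$ (Theorem~\ref{THdecminRapid}), the Fenchel-type duality theorem (Theorem~\ref{THfencICsetsep}) supplies a price vector $p\sp{*}$ with $\decmin(S)=\argmin_{x\in S}(\langle p\sp{*},x\rangle)\cap\argmin(\Phi_{\rm rap}[-p\sp{*}])$, and M-convexity of both factors is then observed. You instead work with the plain square-sum $\Phi(x)=\sum_i x_i^2$ and run everything through the symmetric exchange axiom: local-versus-global optimality for M-convex functions gives $\decmin(S)\subseteq M:=\argmin_S\Phi$; a two-point simultaneous-exchange computation (which is correct, including the tightness argument) forces $M$ into a unit cube; and the standard facts that argmin sets of M-convex functions are M-convex and that $0$--$1$ M-convex sets are matroid basis families give the representation. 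Your closing step --- constancy of the basis weight $\sum_{i\in X}z_i$ forces $z$ to be constant on matroid components, hence all of $M$ shares one sorted vector, hence $M=\decmin(S)$ once \emph{some} dec-min element exists --- is a neat self-contained substitute for the nontrivial implication ``square-sum minimal $\Rightarrow$ dec-min,'' which is otherwise proved by separate exchange arguments. This is much closer in spirit to the original combinatorial proof in \cite{FM22partA} than to the paper's duality proof, and it correctly exploits the fact that square-sum minimization captures dec-minimality on M-convex sets (something that fails for general integrally convex sets, which is exactly why the paper needs $\Phi_{\rm rap}$).

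There is, however, one genuine error: the opening claim that ``every coordinate of such a polyhedron is bounded, [so] $S$ is a finite nonempty set'' is false. Integral base polyhedra, and hence M-convex sets, can be unbounded; for example $S=\{(t,-t)\mid t\in\ZZ\}$ satisfies the exchange axiom. This matters twice in your argument: the existence of a dec-min element is itself one of the assertions of the theorem, and your final upgrade from $\decmin(S)\subseteq M$ to equality hinges on picking some $x\sp{*}\in\decmin(S)$, so without existence you would only conclude $\decmin(S)\subseteq M$ (conceivably with $\decmin(S)=\emptyset$); existence of a minimizer of $\Phi$ on $S$ also needs justification when $S$ is infinite. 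The repair is the one the paper itself invokes: by the exchange axiom all elements of $S$ have the same component-sum $\sigma$. Then (i) the sublevel sets $\{x\in\ZZ^n \mid x(N)=\sigma,\ \Phi(x)\le c\}$ are finite, so $M\neq\emptyset$; and (ii) a dec-min element exists because, with the sum fixed, the first sorted entry is an integer bounded below by $\lceil\sigma/n\rceil$, and one can minimize the sorted entries lexicographically one at a time, each infimum being attained. With this substitution for your first paragraph, the rest of your proof stands.
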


For the intersection of two M-convex sets,
termed an \MM-convex set \cite{Mdcasiam},
the following theorem is known.

\begin{theorem}[{\cite[Corollary~1.2]{FM22fairsbmflow}, \cite[Theorem~13.25]{Mdcamarz24}}] 
    \label{THdecminM2unit}
An \MM-convex set $S$ has a dec-min element.
The set of dec-min elements of $S$ can be represented as
\[
\decmin(S) = \{ z + \unitvec{X} \mid X \in \mathcal{B}_{1} \cap \mathcal{B}_{2} \}
\]
with an integer vector $z$ and matroid basis families
$\mathcal{B}_{1}$ and $\mathcal{B}_{2}$.
In particular, $\decmin(S)$ is an \MM-convex set. 
\end{theorem}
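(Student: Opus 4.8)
The plan is to reduce the decreasing-minimization problem to a single separable-convex minimization over $S=S_{1}\cap S_{2}$ and then to read off the matroidal structure from discrete Fenchel duality, using Theorem~\ref{THdecminMunit} as a template for the single-matroid pieces. First, since each M-convex set $S_{k}$ ($k=1,2$) is the set of integer points of a bounded integral base polyhedron, $S$ is finite; and because $\leq_{\rm dec}$ compares the rearranged vectors lexicographically, it is a total preorder, so a nonempty finite $S$ has $\leq_{\rm dec}$-minimal elements and any two of them share one rearranged vector $\hat v$. Thus $\decmin(S)\neq\emptyset$, every $x\in\decmin(S)$ satisfies $x{\downarrow}=\hat v$, and all elements of $S$ have the same coordinate sum $\sum_{i}x_{i}$ (the common constant of the two base polyhedra).

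Next I would fix a strictly convex $\varphi\colon\RR\to\RR$ (e.g.\ $\varphi(t)=t^{2}$), set $\Phi(x)=\sum_{i=1}^{n}\varphi(x_{i})$, and prove the equivalence $\decmin(S)=\argmin_{x\in S}\Phi(x)$. Since all elements of $S$ share the same coordinate sum, $\Phi$ is Schur-convex there, so I would show that $\hat v$ is the unique majorization-minimum over $S$ and that $\Phi$ is minimized exactly on the elements realizing $\hat v$. The forward implication is immediate; the reverse, that a $\Phi$-minimizer must realize $\hat v$, is where the intersection structure enters, via a two-step (augmenting-path) exchange in $S_{1}\cap S_{2}$ that moves a unit from a large coordinate to a small one while staying in both $S_{k}$. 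I expect this equivalence to be the main obstacle: a single one-coordinate exchange need not preserve membership in both M-convex sets, so the clean argument behind Theorem~\ref{THdecminMunit} must be replaced by a matroid-intersection augmentation.

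Granting the equivalence, I would analyze $M:=\argmin_{x\in S}\Phi(x)$ by writing the objective as $f_{1}+f_{2}$ with $f_{1}=\Phi+\delta_{S_{1}}$ and $f_{2}=\delta_{S_{2}}$, where $\delta_{S_{k}}$ is the indicator of $S_{k}$. Because adding a separable convex function preserves M-convexity (the exchange inequality for $\Phi$ holds for every admissible coordinate pair), $f_{1}$ and $f_{2}$ are both M-convex, so minimizing $f_{1}+f_{2}$ is an M-convex intersection problem. The Fenchel-type duality theorem of discrete convex analysis then supplies an optimal potential $p\in\RR^{n}$ whose local optimality conditions, combined with the strict convexity of $\varphi$, pin each coordinate of every minimizer to at most two consecutive integers. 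Writing $z$ for the common floor and $D$ for the set of coordinates that actually vary, I would verify that $x_{i}\in\{z_{i},z_{i}+1\}$ for all minimizers $x$ and that $z$ is constant on $D$, the ``flatness'' needed for all minimizers to share $\hat v$.

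Finally I would localize. For each $k$ the family $\mathcal{B}_{k}=\{X\subseteq D : z+\unitvec{X}\in S_{k}\}$ is the intersection of the M-convex set $S_{k}$ with a unit-cube slab, hence a $0/1$ M-convex set, i.e.\ the basis family of a matroid; and $z+\unitvec{X}\in S$ precisely when $X\in\mathcal{B}_{1}\cap\mathcal{B}_{2}$. Since every common basis has the same cardinality and $z$ is flat on $D$, all vectors $z+\unitvec{X}$ with $X\in\mathcal{B}_{1}\cap\mathcal{B}_{2}$ share the rearranged vector $\hat v$ and are therefore dec-min, while the two-level analysis shows that no other element of $S$ is. This yields $\decmin(S)=\{z+\unitvec{X}\mid X\in\mathcal{B}_{1}\cap\mathcal{B}_{2}\}$ and exhibits $\decmin(S)$ as an \MM-convex set. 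Beyond the equivalence of the second paragraph, the remaining delicate point is the flatness of $z$ on $D$, since without it the common bases would not all realize the same rearranged vector.
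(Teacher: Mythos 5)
Your high-level route---replace decreasing minimization by separable convex minimization over $S=S_{1}\cap S_{2}$, then extract structure from discrete duality---is close in spirit to the paper's own derivation (Remark~\ref{RMproofMM2}, which specializes the Fenchel-duality argument of Section~\ref{SCproofFenc}), but two of your concrete steps are not merely delicate: they are false. The central one is the claimed equivalence $\decmin(S)=\argmin_{x\in S}\Phi(x)$ for a fixed strictly convex $\varphi$ such as $\varphi(t)=t^{2}$. That equivalence is a genuine theorem for M-convex sets, but it is exactly what breaks for intersections: take $z=(2,1,1,0)$, let $\mathcal{B}_{1}$ be the bases of the partition matroid with parts $\{1,2\},\{3,4\}$ (one element from each) and $\mathcal{B}_{2}$ those of the partition matroid with parts $\{1,3\},\{2,4\}$; then $S_{k}=\{z+\unitvec{X}\mid X\in\mathcal{B}_{k}\}$ are M-convex and $S=S_{1}\cap S_{2}=\{(3,1,1,1),\,(2,2,2,0)\}$. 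Both points have square-sum $12$, yet $(2,2,2,0)<_{\rm dec}(3,1,1,1)$: the two-step exchange you invoke, with increment $(-1,+1,+1,-1)$, improves the decreasing order while preserving $\sum_{i}x_{i}^{2}$ exactly, so no augmenting-path argument can establish your equivalence---it is simply untrue. This is precisely why the paper insists on a rapidly increasing $\varphi$ satisfying \eqref{rapidinc}: Theorem~\ref{THdecminRapid} then gives $\decmin(S)=\argmin(\Phi_{\rm rap}\,|\,S)$ for \emph{every} set $S$, with nothing left to prove. (A smaller error: M-convex sets need not be finite, since base polyhedra can be unbounded; existence of a dec-min element should be deduced from the constant component-sum, as in Remark~\ref{RMproofMM2}, not from finiteness.)

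Your localization step fails as well, even after replacing $t^{2}$ by $\Phi_{\rm rap}$. With your definition $\mathcal{B}_{k}=\{X\subseteq D\mid z+\unitvec{X}\in S_{k}\}$, the set $\{z+\unitvec{X}\mid X\in\mathcal{B}_{1}\cap\mathcal{B}_{2}\}$ equals $S\cap\{z+\unitvec{X}\mid X\subseteq D\}$, which can strictly contain $\decmin(S)$; correspondingly, the ``flatness of $z$ on $D$'' that you flag as the remaining delicate point is false in general. The paper's Example~\ref{EXdecminICmaru1315fenc} refutes both at once, already for $S$ M-convex: there $\decmin(S)$ has four elements, $z=(1,1,0,0)$ and $D=\{1,2,3,4\}$ (so $z$ is not flat on $D$), and $(2,2,0,0)=z+\unitvec{\{1,2\}}$ lies in $S$ and in the unit box but is not dec-min. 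What your construction is missing is the dual potential in the final representation: as in \eqref{fnsumargminIC6}, one has $\decmin(S)=\argmin_{x\in S}(\langle p^{*},x\rangle)\cap\argmin(\Phi_{\rm rap}[-p^{*}])$, and the matroid basis families of the theorem must be read off from the face $\argmin_{x\in S}\langle p^{*},x\rangle$---which is \MM-convex by weight splitting for M-convex intersection---cut by the unit box $\argmin(\Phi_{\rm rap}[-p^{*}])$, not from $S_{1},S_{2}$ themselves. Your middle paragraph (duality applied to $\Phi+\delta_{S_{1}}$ and $\delta_{S_{2}}$) is the right tool for producing $p^{*}$, but that potential must be carried through into the definition of $\mathcal{B}_{1},\mathcal{B}_{2}$; once it is, you recover essentially the paper's argument.
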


In this paper we are concerned with the following similar statement
for an integrally convex set.
While postponing the precise definition of an integrally convex set 
to Section~\ref{SCintcnv}, we mention here
that integral convexity includes M-convexity and \MM-convexity as its special cases.

\begin{theorem}[{\cite[Theorem~13.23]{Mdcamarz24}}]     \label{THdecminICunit}
Let $S$ be an integrally convex set admitting a dec-min element.
The set of dec-min elements of $S$
can be represented as 
$\decmin(S)  = F \cap [z, z']_{\ZZ}$
with some face $F$ of 
the convex hull $\overline{S}$
and integer vectors $z$ and $z'$
satisfying $\veczero \le z' - z \le \vecone$.
In particular, $\decmin(S)$ is an integrally convex set. 
\end{theorem}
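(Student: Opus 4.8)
The plan is to reduce the structural statement to a single separable convex minimization over $S$ and then to peel that optimization apart by Fenchel-type duality. I would first record the elementary but decisive observation that all dec-min elements share one sorted vector: if $x,y \in \decmin(S)$ then $x \le_{\rm dec} y$ and $y \le_{\rm dec} x$, whence $x{\downarrow} = y{\downarrow} =: h$. Consequently a \emph{symmetric} separable function $\Phi(x) = \sum_{i} \varphi(x_{i})$ takes a common value on all of $\decmin(S)$, depending only on $h$, and the entire problem turns into separating that value from the ones attained at non-dec-min points. For every $y \in S \setminus \decmin(S)$ one has $x <_{\rm dec} y$ strictly, i.e.\ $h <_{\rm lex} y{\downarrow}$, so at the first coordinate where the two sorted vectors differ $y{\downarrow}$ is strictly larger.

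The key lemma I would establish is that there is a strictly convex, coercive $\varphi\colon \RR \to \RR$ (a sufficiently steep, evenly growing convex function) with $\argmin_{x \in S} \Phi(x) = \decmin(S)$. The mechanism is that a steep $\varphi$ makes the value of $\Phi$ dominated by the largest components, so that $\sum_{i}\varphi(h_{i}) < \sum_{i}\varphi(y{\downarrow}_{i})$ once $\varphi$ grows fast enough to beat the finitely many bounded trade-offs near $h$, while coercivity simultaneously discards the unbounded $y\in S$. This ``steepness beats the trade-off'' estimate is exactly where I expect the main obstacle to lie: unlike the M-convex case, where $\sum_{i}x_{i}$ is constant and any strictly convex $\varphi$ (for instance the square-sum) already yields dec-min, an integrally convex $S$ has varying coordinate sum, and a naive square-sum can prefer a lexicographically larger point. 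One must therefore argue that the relevant comparisons reduce to finitely many bounded configurations and that a single $\varphi$, steep enough relative to $S$, dominates all of them.

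With such a $\varphi$ fixed, I would invoke the Fenchel-type duality theorem for the integrally convex function $\delta_{S}$ (the indicator of $S$) and the separable convex function $\Phi$, using that $\delta_{S}+\Phi$ is again integrally convex, so the minimization attains its value at an integral point and admits an integral dual optimizer $p^{*}$. Complementary slackness then splits optimality of a minimizer $x^{*}$ into two conditions: (i) $x^{*}$ maximizes $\langle p^{*}, \cdot\rangle$ over $S$, i.e.\ $x^{*}$ lies in the face $F := \argmax_{x \in \overline{S}} \langle p^{*}, x\rangle$ of $\overline{S}$; and (ii) $x^{*}$ minimizes $\Phi(x) + \langle p^{*}, x\rangle = \sum_{i}\bigl(\varphi(x_{i}) + p^{*}_{i} x_{i}\bigr)$ over $\ZZ^{n}$. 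Since each summand is strictly convex, its set of integer minimizers consists of at most two consecutive integers, so the set of all $x$ satisfying (ii) is a box $[z, z']_{\ZZ}$ with $\veczero \le z' - z \le \vecone$.

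Combining (i) and (ii) with the converse direction of complementary slackness then gives $\decmin(S) = \argmin_{x \in S}\Phi(x) = (F \cap \ZZ^{n}) \cap [z, z']_{\ZZ} = F \cap [z, z']_{\ZZ}$, the asserted representation; here $F \cap \ZZ^{n} = \argmax_{x\in S}\langle p^{*},x\rangle$ because $\overline{S}\cap\ZZ^{n}=S$. Integral convexity of $\decmin(S)$ follows because the set is confined to the single unit cube $[z, z']$: the integer points of the polytope $F \cap [z, z']$ lying inside one unit cube automatically satisfy the local hull condition defining integral convexity. Finally I would note that the second, more elementary route replaces the duality machinery by Farkas' lemma: one sets $z_{i} := \min_{x\in\decmin(S)} x_{i}$ and $z'_{i} := \max_{x\in\decmin(S)} x_{i}$, proves directly by integral convexity and a local exchange argument that $z' - z \le \vecone$, and then uses Farkas' lemma to produce the supporting functional $p^{*}$ exposing $F$ — the very certificate that appears above as the dual optimizer.
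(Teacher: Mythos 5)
Your route is essentially the paper's first proof (Section~\ref{SCproofFenc}): replace dec-minimization by minimization of a symmetric separable convex $\Phi$ over $S$, apply the Fenchel-type duality theorem for an integrally convex set paired with a separable convex function to obtain a dual optimizer $p\sp{*}$, read the face $F$ off the linear part, read the box $[z,z']_{\ZZ}$ with $z'-z\le\vecone$ off strict convexity of $\varphi$, and conclude integral convexity of $\decmin(S)$ because it sits inside one unit cube. That second half of your argument is correct and matches Proposition~\ref{PRdecminICpexist} and the paragraph after it (one small remark: an \emph{integral} dual optimizer exists only when $\Phi$ is integer-valued, but you never need integrality of $p\sp{*}$, so this is harmless).

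The soft spot is your key lemma, which is exactly the statement the paper does not prove but cites (Theorem~\ref{THdecminRapid}, due to Frank and Murota), and your proposed function class is wrong for it. The cited result uses a \emph{rapidly increasing} $\varphi$: positive, with $\varphi(k+1)\ge n\,\varphi(k)$ for all $k\in\ZZ$, hence increasing and tending to $0$ as $k\to-\infty$ --- emphatically \emph{not} coercive, and depending only on $n$, not on $S$. Coercivity is not an asset here but a liability: it penalizes very negative components, which dec-minimality rewards. Concretely, $S=\{(0,-k)\mid 0\le k\le K\}$ is integrally convex with unique dec-min element $(0,-K)$, yet any coercive $\varphi$ whose minimum lies above $-K$ puts $\argmin(\Phi\,|\,S)$ at a different point; so ``steep enough'' alone cannot repair the lemma --- you would also have to force the minimum of $\varphi$ below $\min_i h_i$. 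Moreover, since an integrally convex set may be infinite, your reduction to ``finitely many bounded configurations'' does not apply; what is needed is a uniform estimate at the first sorted index where $y{\downarrow}$ exceeds $h$, and the multiplicative condition $\varphi(k+1)\ge n\,\varphi(k)$ is precisely what makes that estimate work for \emph{all} integer vectors at once (it also rules out unbounded $y\in S$, so coercivity is not doing that job either). The simplest repair is to drop coercivity, take $\varphi(k)=n\sp{k}$, and invoke (or reprove) Theorem~\ref{THdecminRapid}; with that substitution the rest of your argument goes through verbatim. Finally, your closing remark on the Farkas route does correspond to the paper's second proof (Section~\ref{SCproofAlt}), but be aware that the unit-box bound there (Proposition~\ref{PRdecminICunit}) is a genuinely nontrivial multi-step combinatorial argument, not a quick local exchange.
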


The assumption of integral convexity in Theorem~\ref{THdecminICunit}
cannot be removed, as follows.

\begin{example} \rm \label{EXdecminNICunit}
Let $S= \{ x, y \}$
with $x = (2,1,0,0)$ and $y = (0,0,1,2)$,
where $S$ is not integrally convex.
Both $x$ and $y$ are dec-min in $S$, but we have $\| (2,1,0,0) - (0,0,1,2)  \|_{\infty}=2$. 
Note that the convex hull $\overline{S}$ (line segment connecting $x$ and $y$) 
is an integral polyhedron and $\overline{S} \cap \ZZ\sp{4}=S$.
\finbox
\end{example}

The objective of this paper is to give two different proofs to 
Theorem~\ref{THdecminICunit}.
The first proof, given in Section~\ref{SCproofFenc}, resorts to 
the Fenchel-type duality theorem 
for integrally convex functions
established recently by the present authors \cite{MT22ICfenc}.
Although this seems to be a nice application of a recent result
in discrete convex analysis,
it is also natural to ask whether such a machinery is inevitable 
to prove Theorem~\ref{THdecminICunit}.
For example, the original proofs 
\cite{FM22partA,FM22fairsbmflow}
of Theorems \ref{THdecminMunit} and \ref{THdecminM2unit}
for M-convex and \MM-convex cases
are based on standard tools in combinatorial optimization.
In Section~\ref{SCproofAlt},
we give the second elementary proof,
which shows the existence of a box $[z, z']_{\ZZ}$
in the statement without relying on any tools and 
the existence of a face $F$ on the basis of Farkas' lemma.


\section{Preliminaries}
\label{SCprelim}

\subsection{Integrally convex sets}
\label{SCintcnv}

The concept of integrally convex functions
was introduced by Favati and Tardella \cite{FT90}
and its set version was formulated 
in \cite[Sec.~3.4]{Mdcasiam}.

For any $x \in \RR^{n}$ the
{\em integral neighborhood}
of $x$ is defined by
\begin{equation}  \label{intneighbordef}
{\rm N}(x) = \{ z \in \ZZ^{n} \mid | x_{i} - z_{i} | < 1 \ (i=1,2,\ldots,n)  \} .
\end{equation}
It is noted that 
strict inequality 
``\,$<$\,'' 
is used in this definition,
so that ${\rm N}(x)$ admits an alternative expression
\begin{equation}  \label{intneighbordeffloorceil}
{\rm N}(x) = \{ z \in \ZZ\sp{n} \mid
\lfloor x_{i} \rfloor \leq  z_{i} \leq \lceil x_{i} \rceil  \ \ (i=1,2,\ldots, n) \} ,
\end{equation}
where, for $t \in \RR$ in general, 
$\left\lfloor  t  \right\rfloor$
denotes the largest integer not larger than $t$
({\em rounding-down} to the nearest integer)
and 
$\left\lceil  t   \right\rceil$ 
is the smallest integer not smaller than $t$
({\em rounding-up} to the nearest integer).
That is,
${\rm N}(x)$ consists of all integer vectors $z$ 
between
$\left\lfloor x \right\rfloor 
=( \left\lfloor x_{1} \right\rfloor ,\left\lfloor x_{2} \right\rfloor , 
  \ldots, \left\lfloor x_{n} \right\rfloor)$ 
and 
$\left\lceil x \right\rceil 
= ( \left\lceil x_{1} \right\rceil, \left\lceil x_{2} \right\rceil,
   \ldots, \left\lceil x_{n} \right\rceil)$.

For any set $S \subseteq \ZZ^{n}$
and $x \in \RR^{n}$
we call the convex hull of $S \cap {\rm N}(x)$ 
the 
{\em local convex hull}
of $S$ around $x$.
A nonempty set
$S \subseteq \ZZ^{n}$ is said to be 
{\em integrally convex}
if the union of the local convex hulls $\overline{S \cap {\rm N}(x)}$ over $x \in \RR^{n}$ 
coincides with the convex hull of $S$,
that is, if
\begin{equation}  \label{icsetdef0}
 \overline{S} = \bigcup_{x \in \RR\sp{n}} \overline{S \cap {\rm N}(x)}.
\end{equation}
Condition \eqref{icsetdef0} is equivalent to the following:
\begin{align}  
& x \in \overline{S} \ \Longrightarrow  \  x \in  \overline{S \cap {\rm N}(x)} 
\qquad
\mbox{for all $x \in \RR\sp{n}$}.
\label{icsetdef1}
\end{align}
It is pointed out in \cite{Mopernet21} that 
an integrally convex set is precisely the set of integer points
of a box-integer polyhedron (see \cite[Section~5.15]{Sch03}
for the definition of a box-integer polyhedron).
Obviously, 
every subset of $\{ 0, 1\}\sp{n}$ is an integrally convex set. 
It is known \cite{Mdcasiam} that M-convex sets and \MM-convex sets
are integrally convex.

\subsection{Convex characterization of dec-minimality}

We describe a convex characterization of dec-minimality.

A function 
$\varphi: \ZZ \to \Rinf$
in a single integer variable
is called  a (univariate) 
{\em discrete convex function}
if its effective domain $\dom \varphi =\{ k \in \ZZ \mid \varphi(k) < +\infty \}$ 
is an interval of integers
and the inequality
$ 
\varphi(k-1) + \varphi(k+1) \geq 2 \varphi(k)
$ 
holds for each $k \in \dom \varphi$.
A function 
$\Phi: \ZZ^{n} \to \Rinf$
is called a
{\em separable convex function}
if it can be represented as
$\Phi(x) = \sum_{i=1}\sp{n} \varphi_{i}(x_{i})$
with univariate discrete convex functions $\varphi_{i}: \ZZ \to \Rinf$
$(i=1,2,\ldots,n)$.

Consider a  symmetric separable convex function 
$\Phi_{\rm sym}(x) = \sum_{i=1}\sp{n} \varphi(x_{i})$
defined by a finite-valued convex function $\varphi: \ZZ \to \RR$.
If
\begin{equation} \label{rapidinc}
 \varphi(k+1) \geq  n \cdot  \varphi(k)  > 0 
\qquad (k \in \ZZ) ,
\end{equation}
we say that $\Phi_{\rm sym}$ is a
{\em rapidly increasing}
symmetric separable convex function
(with respect to $n$),
and use notation $\Phi_{\rm rap}$
with the subscript ``{\rm rap}'' for ``rapidly increasing.''
That is,
\[
\Phi_{\rm rap}(x) = \sum_{i=1}\sp{n} \varphi(x_{i})
\]
denotes a symmetric separable convex function
defined by a positive-valued discrete convex function 
$\varphi: \ZZ \to \RR$
satisfying \eqref{rapidinc}.
When $n \geq 2$,
such $\varphi$ is strictly convex,
since 
\[
\varphi(k-1) +  \varphi(k+1) > \varphi(k+1) \geq n \varphi(k) \geq 2 \varphi(k)
\]
for all $k \in \ZZ$.
Note that
$\varphi(k) \to +\infty$ as $k \to +\infty$ and
$\varphi(k) \to 0$ as $k \to -\infty$.
For example,  
$\varphi (k) = c\sp{k}$ with $c \ge n$
satisfies \eqref{rapidinc}.

The following theorem shows a fundamental fact, 
the equivalence between decreasing minimization 
and minimization of the function $\Phi_{\rm rap}$.
We denote the set of all minimizers of
$\Phi_{\rm rap}$ on $S$
by $\argmin (\Phi_{\rm rap} | S)$.
No assumption is made on the type of discrete convexity
(like M-convexity) of the set $S \subseteq \ZZ\sp{n}$.

\begin{theorem}[{\cite{FM19partII}, \cite[Theorem~13.5]{Mdcamarz24}}]  \label{THdecminRapid}
Assume $n \geq 2$. Let $S \subseteq \ZZ\sp{n}$ and 
$\Phi_{\rm rap}(x) = \sum_{i=1}\sp{n} \varphi(x_{i})$
be a symmetric separable convex function
with $\varphi$ satisfying \eqref{rapidinc}.

\noindent
{\rm (1)} \ 
For any $x, y \in \ZZ\sp{n}$, we have:
$x <_{\rm dec} y$ $\iff$
$\Phi_{\rm rap}(x) < \Phi_{\rm rap}(y)$. 

\noindent
{\rm (2)} \
$S$ has a dec-min element
$\iff$ 
$\Phi_{\rm rap}$ has a minimizer on $S$.

\noindent
{\rm (3)} \ 
$\decmin(S) = \argmin (\Phi_{\rm rap} | S)$.
\end{theorem}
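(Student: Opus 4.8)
The plan is to reduce all three statements to the single implication
\begin{equation*}
x <_{\rm dec} y \ \Longrightarrow\ \Phi_{\rm rap}(x) < \Phi_{\rm rap}(y),
\end{equation*}
which is the forward direction of part~(1). Since $\Phi_{\rm rap}(x)=\sum_{i}\varphi(x_{i})$ is an order-independent sum, we have $\Phi_{\rm rap}(x)=\Phi_{\rm rap}(x{\downarrow})$ for every $x$; in particular $x{\downarrow}=y{\downarrow}$ forces $\Phi_{\rm rap}(x)=\Phi_{\rm rap}(y)$. The three possibilities $x<_{\rm dec}y$, ``$x{\downarrow}=y{\downarrow}$'', $y<_{\rm dec}x$ are mutually exclusive and exhaustive, as are $<$, $=$, $>$ for the scalars $\Phi_{\rm rap}(x)$ and $\Phi_{\rm rap}(y)$. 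Hence once the displayed implication and the equality case are established, applying them with the roles of $x$ and $y$ interchanged upgrades the one-sided implication to the full biconditional of part~(1) and, more usefully, to the monotone form
\begin{equation*}
x \leq_{\rm dec} y \ \Longleftrightarrow\ \Phi_{\rm rap}(x) \leq \Phi_{\rm rap}(y).
\end{equation*}

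For the key implication, set $a=x{\downarrow}$ and $b=y{\downarrow}$ and let $m$ be the smallest index with $a_{m}\neq b_{m}$, so $a_{i}=b_{i}$ for $i<m$ and, by $x<_{\rm dec}y$, $b_{m}\geq a_{m}+1$. Cancelling the equal leading terms gives
\begin{equation*}
\Phi_{\rm rap}(y)-\Phi_{\rm rap}(x)
= \bigl(\varphi(b_{m})-\varphi(a_{m})\bigr)
 + \sum_{i=m+1}\sp{n}\bigl(\varphi(b_{i})-\varphi(a_{i})\bigr).
\end{equation*}
Monotonicity of $\varphi$ (which follows from \eqref{rapidinc}, as $\varphi(k+1)\geq n\varphi(k)>\varphi(k)$) gives $\varphi(b_{m})\geq\varphi(a_{m}+1)$, and then \eqref{rapidinc} bounds the first term from below by $n\varphi(a_{m})-\varphi(a_{m})=(n-1)\varphi(a_{m})$. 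For each remaining index $i>m$ we have $a_{i}\leq a_{m}$, so positivity and monotonicity of $\varphi$ yield $\varphi(b_{i})-\varphi(a_{i})>-\varphi(a_{i})\geq-\varphi(a_{m})$; summing the at most $n-m$ such terms, the second term exceeds $-(n-m)\varphi(a_{m})$. Adding the two bounds, the difference exceeds $(m-1)\varphi(a_{m})\geq 0$, whence $\Phi_{\rm rap}(x)<\Phi_{\rm rap}(y)$. (When $m=n$ the sum is empty and the first term alone is positive, since $n\geq 2$ and $\varphi(a_{m})>0$.)

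With the monotone biconditional in hand, parts~(3) and~(2) are immediate. For part~(3), $x\in\decmin(S)$ means $x\in S$ and $x\leq_{\rm dec}y$ for all $y\in S$, which by the biconditional is exactly $\Phi_{\rm rap}(x)\leq\Phi_{\rm rap}(y)$ for all $y\in S$, i.e.\ $x\in\argmin(\Phi_{\rm rap}\,|\,S)$; thus $\decmin(S)=\argmin(\Phi_{\rm rap}\,|\,S)$ as sets. Part~(2) is then the assertion that one side is nonempty iff the other is, which holds because the two sets coincide; no compactness or attainment argument is needed, as ``having a minimizer'' is just nonemptiness of $\argmin(\Phi_{\rm rap}\,|\,S)$.

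The main obstacle is the estimate in the second paragraph: one must guarantee that the single advantage gained at the most significant differing coordinate $m$ outweighs the combined disadvantage spread over the up to $n-1$ less significant coordinates. This is exactly what the factor $n$ in the rapid-increase condition \eqref{rapidinc} is engineered to deliver, the inequality $\varphi(a_{m}+1)\geq n\varphi(a_{m})$ being tight against the worst case of $n-1$ fully compensating terms. The remaining points are routine bookkeeping: treating the empty-sum case $m=n$, and verifying that the trichotomy argument legitimately promotes the one-directional implication to the biconditional and its $\leq$-version.
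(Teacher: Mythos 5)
Your proof is correct: the estimate $(n-1)\varphi(a_m)$ against the worst-case loss $(n-m)\varphi(a_m)$ is exactly what condition \eqref{rapidinc} is designed for, the $m=n$ edge case is handled, and the trichotomy argument validly upgrades the one-sided implication to the biconditional, from which (2) and (3) follow as you say. Note that the paper itself gives no proof of this theorem --- it is quoted from \cite{FM19partII} and \cite[Theorem~13.5]{Mdcamarz24} --- so there is nothing in-paper to compare against; your argument is the standard one behind those cited sources.
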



\section{Proof Based on Fenchel-type Duality}
\label{SCproofFenc}

In this section we show the proof of Theorem~\ref{THdecminICunit}
based on the Fenchel-type duality theorem
in discrete convex analysis.
While this proof was sketched in \cite[Section~13.7]{Mdcamarz24} 
(in Japanese), 
we describe it here in more detail.

We rely on the Fenchel-type duality theorem 
for integrally convex functions
\cite[Theorem~1.1]{MT22ICfenc}.
The theorem, specialized to separable convex minimization on an integrally convex set,
reads as follows.  We use notations:
\begin{align*}
& \langle p, x \rangle 
= p_{1} x_{1} + p_{2} x_{2} +  \cdots + p_{n} x_{n}
 \qquad 
(p \in \RR\sp{n}, x \in \ZZ\sp{n}),
\\ &
\Phi[-p](x) 
= \Phi(x) - \langle p , x \rangle  
= \Phi(x) - \sum_{i=1}\sp{n} p_{i} x_{i}
 \qquad 
(p \in \RR\sp{n}, x \in \ZZ\sp{n}),
\\ &
\argmin \Phi[-p] = \{ x \in \ZZ\sp{n} \mid 
   \Phi[-p](x) \leq \Phi[-p](y) \mbox{ for all $y \in \ZZ\sp{n}$} \}
\qquad (p \in \RR\sp{n}),
\\ &
\theta_{S}(p) = \inf \{ \langle p, x \rangle \mid  x\in S \}
 \qquad (p \in \RR\sp{n}),
\\ &
\Phi\sp{\bullet}(p) = \sup \{ \langle p, x \rangle - \Phi(x) \mid x \in \ZZ\sp{n} \}
 \qquad (p \in \RR\sp{n}).
\end{align*}

\begin{theorem}[{\cite[Theorem~6.2]{MT23ICsurv}}]   \label{THfencICsetsep}
Let
$S \ (\subseteq \ZZ\sp{n})$
be an integrally convex set and 
$\Phi: \ZZ\sp{n} \to \Rinf$
a separable convex function.
Assume that
$S \cap \dom \Phi \neq \emptyset$ and
$\inf \{ \Phi(x) \mid  x \in S  \}$
is attained by some $x \in S \cap \dom \Phi$.
Then
\[
\min \{ \Phi(x) \mid  x \in S  \} 
= \max \{ \theta_{S}(p) - \Phi\sp{\bullet}(p) \mid  p \in \RR\sp{n} \},
\]
where the maximum is attained by some $p = p\sp{*} \in \RR\sp{n}$. 
Moreover, the set of all primal optimal solutions can be described as
\begin{equation} \label{fnsumargminICsep}
\argmin (\Phi | S)
 = \argmin_{x \in S} ( \langle p\sp{*}, x \rangle )
 \cap \argmin (\Phi[-p\sp{*}])
\end{equation} 
with an arbitrary dual optimal solution $p\sp{*}$.
If, in addition, $\Phi$ is integer-valued, then
\begin{equation*} 
 \min \{ \Phi(x) \mid  x \in S  \} 
= \max \{ \theta_{S}(p) - \Phi\sp{\bullet}(p) \mid  p \in \ZZ\sp{n} \} ,
\end{equation*} 
where the maximum is attained by some $p \in \ZZ\sp{n}$. 
\end{theorem}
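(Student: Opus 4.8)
The plan is to obtain Theorem~\ref{THfencICsetsep} as a direct specialization of the general Fenchel-type duality theorem for integrally convex functions \cite[Theorem~1.1]{MT22ICfenc}. First I would recast the constrained problem as an unconstrained one. Writing $\delta_{S}\colon \ZZ\sp{n} \to \Rinf$ for the indicator function of $S$ (equal to $0$ on $S$ and to $+\infty$ off $S$) and setting $g = -\delta_{S}$, we have
\[
 \min \{ \Phi(x) \mid x \in S \}
 = \min_{x \in \ZZ\sp{n}} \{ \Phi(x) + \delta_{S}(x) \}
 = \min_{x \in \ZZ\sp{n}} \{ \Phi(x) - g(x) \} ,
\]
so that minimizing $\Phi$ over $S$ is exactly the Fenchel primal problem for the pair $(\Phi, g)$.

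Next I would verify that this pair lies in the scope of the general theorem and unwind the conjugates. A separable convex function is integrally convex, and the indicator function of an integrally convex set is integrally convex (this is essentially the content of \eqref{icsetdef1}); hence $\Phi$ is integrally convex and $g = -\delta_{S}$ is the negative of an integrally convex function, which is precisely the configuration treated in \cite[Theorem~1.1]{MT22ICfenc}. The assumptions $S \cap \dom \Phi \neq \emptyset$ and attainment of the primal infimum supply the feasibility and finiteness that the theorem requires. Its conclusion is the asserted min--max equality once the two conjugates are identified: the convex conjugate of $\Phi$ is $\Phi\sp{\bullet}$ by definition, while the concave conjugate of $g$ is
\[
 \inf_{x \in \ZZ\sp{n}} \{ \langle p, x \rangle - g(x) \}
 = \inf_{x \in S} \langle p, x \rangle
 = \theta_{S}(p) ,
\]
so the dual objective is $\theta_{S}(p) - \Phi\sp{\bullet}(p)$ and its maximum is attained at some $p\sp{*}$.

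I would then read off the primal description \eqref{fnsumargminICsep} by complementary slackness. For any $x \in S$, subtracting $\Phi\sp{\bullet}(p\sp{*})$ from the chain
\[
 \Phi(x) + \Phi\sp{\bullet}(p\sp{*}) \ge \langle p\sp{*}, x \rangle \ge \theta_{S}(p\sp{*})
\]
gives $\Phi(x) \ge \langle p\sp{*}, x \rangle - \Phi\sp{\bullet}(p\sp{*}) \ge \theta_{S}(p\sp{*}) - \Phi\sp{\bullet}(p\sp{*})$, whose right end is the common optimal value. Hence $x$ is primal optimal exactly when both inequalities are tight, that is, when $x \in \argmin \Phi[-p\sp{*}]$ (first inequality) and $x \in \argmin_{x \in S} \langle p\sp{*}, x \rangle$ (second inequality); this is \eqref{fnsumargminICsep}, valid for an arbitrary dual optimum $p\sp{*}$. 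Finally, when $\Phi$ is integer-valued the integrality clause of \cite[Theorem~1.1]{MT22ICfenc} supplies a dual optimum $p \in \ZZ\sp{n}$.

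The hard part is not the bookkeeping above but justifying that the general theorem genuinely applies: for an arbitrary pair of integrally convex functions Fenchel duality may fail, since a duality gap can occur and the dual need not be attained. The essential point is therefore that this specific pair---a separable convex function against the indicator of an integrally convex set---falls within the class for which \cite[Theorem~1.1]{MT22ICfenc} guarantees both zero gap and dual attainment under the mere hypotheses of feasibility and primal attainment. Granting that theorem, everything else reduces to matching definitions and to the standard Fenchel--Young complementary slackness argument.
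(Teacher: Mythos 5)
The paper offers no proof of this statement at all: it is imported from \cite[Theorem~6.2]{MT23ICsurv} precisely as the specialization of the Fenchel-type duality theorem \cite[Theorem~1.1]{MT22ICfenc} to a separable convex function plus the indicator of an integrally convex set, and your proposal carries out exactly that specialization (indicator reformulation, identification of $\theta_{S}$ and $\Phi\sp{\bullet}$ as the two conjugates, Fenchel--Young complementary slackness yielding \eqref{fnsumargminICsep}, and the integrality clause), so it is correct and follows the same route. One wording slip deserves note: the configuration covered by \cite[Theorem~1.1]{MT22ICfenc} is one integrally convex function paired with one \emph{separable} convex (equivalently, concave) function, so the roles must be $\delta_{S}$ integrally convex and $-\Phi$ separable concave; your middle paragraph assigns the roles the other way (``$\Phi$ is integrally convex and $g=-\delta_{S}$ is the negative of an integrally convex function''), which, read literally, is a pair of two integrally convex functions---the very configuration for which duality can fail, as you yourself observe---but your final paragraph states the correct scope, and with that reading the application is sound.
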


The following proposition gives a key identity \eqref{fnsumargminIC6}.

\begin{proposition}     \label{PRdecminICpexist}
Assume $n \ge 2$.
Let $S \ (\subseteq \ZZ\sp{n})$
be an integrally convex set admitting a dec-min element
and 
$\Phi_{\rm rap}(x) = \sum_{i=1}\sp{n} \varphi(x_{i})$
be a rapidly increasing symmetric separable convex function
with $\varphi$ satisfying \eqref{rapidinc}.
Then
\begin{equation} \label{fnsumargminIC6}
\decmin(S) =
\argmin (\Phi_{\rm rap} | S)
 = \argmin_{x \in S} ( \langle p\sp{*}, x \rangle )
 \cap \argmin (\Phi_{\rm rap}[-p\sp{*}])
\end{equation} 
for some $p\sp{*} \in \RR\sp{n}$.
\end{proposition}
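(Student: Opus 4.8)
The plan is to obtain the proposition as a direct combination of two results already available: Theorem~\ref{THdecminRapid}, which reduces decreasing minimization to the minimization of $\Phi_{\rm rap}$, and the Fenchel-type duality Theorem~\ref{THfencICsetsep}, which splits the set of minimizers into the asserted intersection. The first equality $\decmin(S) = \argmin(\Phi_{\rm rap} \mid S)$ is immediate from Theorem~\ref{THdecminRapid}(3), which applies since $n \ge 2$ and $\varphi$ satisfies \eqref{rapidinc}. Consequently, all the actual work lies in producing the vector $p\sp{*}$ and establishing the second equality in \eqref{fnsumargminIC6}.

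To invoke Theorem~\ref{THfencICsetsep} with $\Phi = \Phi_{\rm rap}$, I would verify its hypotheses in turn. Integral convexity of $S$ is assumed, and $\Phi_{\rm rap}$ is separable convex by its very definition. Because $\varphi$ is finite-valued (indeed positive-valued) on all of $\ZZ$, the effective domain satisfies $\dom \Phi_{\rm rap} = \ZZ\sp{n}$, so $S \cap \dom \Phi_{\rm rap} = S$, which is nonempty as $S$ admits a dec-min element. The one remaining condition is that the infimum $\inf\{\Phi_{\rm rap}(x) \mid x \in S\}$ be attained in $S$.

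I expect the attainment condition to be the only genuinely substantive point, and it is precisely where the dec-min hypothesis enters. By Theorem~\ref{THdecminRapid}(2), $S$ has a dec-min element if and only if $\Phi_{\rm rap}$ has a minimizer on $S$; since $S$ is assumed to admit a dec-min element, the infimum is attained at some point of $S = S \cap \dom \Phi_{\rm rap}$. With every hypothesis confirmed, Theorem~\ref{THfencICsetsep} supplies a dual optimal $p\sp{*} \in \RR\sp{n}$ together with the decomposition $\argmin(\Phi_{\rm rap} \mid S) = \argmin_{x \in S}(\langle p\sp{*}, x \rangle) \cap \argmin(\Phi_{\rm rap}[-p\sp{*}])$, which is exactly the second equality of \eqref{fnsumargminIC6}. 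No new estimates are needed beyond these routine checks; the conceptual crux is simply that the equivalence in Theorem~\ref{THdecminRapid}(2) converts the existence of a dec-min element into the existence of a primal optimal solution required by the duality theorem.
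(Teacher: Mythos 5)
Your proposal is correct and follows essentially the same route as the paper: both obtain the first equality and the attainment of the minimum from Theorem~\ref{THdecminRapid} (using the dec-min hypothesis), and then invoke the Fenchel-type duality Theorem~\ref{THfencICsetsep} to produce $p\sp{*}$ and the decomposition \eqref{fnsumargminICsep}. Your explicit verification that $\dom \Phi_{\rm rap} = \ZZ\sp{n}$ and that attainment follows from Theorem~\ref{THdecminRapid}(2) merely spells out checks the paper leaves implicit.
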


\begin{proof}
By Theorem~\ref{THdecminRapid}
we have 
\begin{equation}  \label{decminargminrapid}
 \decmin(S) =\argmin (\Phi_{\rm rap} | S),
\end{equation}
which is nonempty by the assumption that  $S$ admits a dec-min element.
On the other hand, the Fenchel-type duality 
(Theorem~\ref{THfencICsetsep}) shows that
\[ 
 \min \{ \Phi_{\rm rap}(x) \mid  x \in S  \} 
= \max \{ \theta_{S}(p) - \Phi_{\rm rap}\sp{\bullet}(p) \mid  p \in \RR\sp{n} \},
\] 
where the minimum is attained.
Therefore, the maximum is attained by some $p=p\sp{*}$, for which 
we have
\begin{equation} \label{fnsumargminICseprap}
\argmin (\Phi_{\rm rap} | S)
 = \argmin_{x \in S} ( \langle p\sp{*}, x \rangle )
 \cap \argmin (\Phi_{\rm rap}[-p\sp{*}])
\end{equation} 
by \eqref{fnsumargminICsep}.
A combination of 
\eqref{decminargminrapid} and \eqref{fnsumargminICseprap} yields
\eqref{fnsumargminIC6}.
\end{proof}

To prove Theorem~\ref{THdecminICunit}, we may assume $n \ge 2$
(the case of $n = 1$ is trivial).
Using the vector  $p\sp{*}$ in Proposition~\ref{PRdecminICpexist},
define
$\displaystyle  F =  \argmin_{x \in \overline{S}} ( \langle p\sp{*}, x \rangle)$.
This is a face of $\overline{S}$
and we have
\[
\displaystyle  \argmin_{x \in S} ( \langle p\sp{*}, x \rangle) = F \cap \ZZ\sp{n},
\]
whereas by strict convexity of $\varphi$, we have
\[
\argmin (\Phi_{\rm rap}[-p\sp{*}]) = [z, z']_{\ZZ}
\]
for some $z,  z' \in \ZZ\sp{n}$ 
with $\veczero \le z' - z \le \vecone$.
Substituting these into 
\eqref{fnsumargminIC6}, we obtain
\[
\decmin(S) = \argmin (\Phi_{\rm rap} | S) 
= F \cap [z, z']_{\ZZ} \subseteq z + \{ 0,1 \}\sp{n}.
\]
Any set contained in a unit cube is integrally convex.
Hence follows the integral convexity of $\decmin(S)$.

\begin{remark} \rm  \label{RMproofMM2}
Theorems \ref{THdecminMunit} and \ref{THdecminM2unit}
can be proved in the same way as above with some additional observations.
First assume that $S$ is an M-convex set.
All vectors in $S$ have the same component-sum,
which guarantees the existence of a dec-min element,
i.e., $\decmin(S) \ne \emptyset$.
The set $\argmin_{x \in S} ( \langle p\sp{*}, x \rangle )$
is M-convex,
and the intersection of this M-convex set with
$[z, z']_{\ZZ}$ is also M-convex.
Thus we obtain Theorem~\ref{THdecminMunit}.
The proof of Theorem~\ref{THdecminM2unit} for an \MM-convex set 
can be obtained with the observation that
$\argmin_{x \in S} ( \langle p\sp{*}, x \rangle )$
is an \MM-convex set and the intersection of an \MM-convex set with
$[z, z']_{\ZZ}$ is also \MM-convex.
\finbox
\end{remark}

\begin{example} \rm \label{EXdecminICmaru1315fenc}
The formula \eqref{fnsumargminIC6} is illustrated for a simple example
\cite[Example~13.15]{Mdcamarz24}.
In particular, we identify the vector $p\sp{*}$ in the formula.

\begin{enumerate}
\item
Let
$S= \{ 
(2,1,1,0), 
(2,1,0,1), 
(1,2,1,0), 
(1,2,0,1), 
(2,2,0,0)
 \}$,
which is an integrally convex set
(actually an M-convex set).
Putting
$x\sp{1}=(2,1,1,0)$, 
$x\sp{2}=(2,1,0,1)$,
$x\sp{3}=(1,2,1,0)$, 
$x\sp{4}=(1,2,0,1)$, and 
$x\sp{5}=(2,2,0,0)$,
we have
$S = \{ x\sp{1}, x\sp{2},x\sp{3}, x\sp{4}, x\sp{5} \}$
and
\[
\decmin(S) = \{ 
(2,1,1,0), 
(2,1,0,1), 
(1,2,1,0), 
(1,2,0,1)
 \} 
= \{ x\sp{1}, x\sp{2},x\sp{3}, x\sp{4} \}.
\]

\item
Let 
$\Phi_{\rm rap}(x) = \sum_{i=1}\sp{4} \varphi(x_{i})$
with
$\varphi(k) = 10\sp{k}$.
We have
\begin{align*}
& \Phi_{\rm rap}(x\sp{j}) = \Phi_{\rm rap}((2,1,1,0)) =
 \varphi(2) + 2 \varphi(1) +  \varphi(0) = 121
\quad (j=1,2,3,4),
\\ &
\Phi_{\rm rap}(x\sp{5})= \Phi_{\rm rap}((2,2,0,0))
 = 2 \varphi(2) + 2 \varphi(0)  = 202 .
\end{align*}
We have $\decmin(S) =\argmin (\Phi_{\rm rap} | S)$
as in Theorem~\ref{THdecminRapid}(3).

\item
The condition on $p\sp{*}$ for the inclusion 
$\displaystyle \argmin_{x \in S} ( \langle p\sp{*}, x \rangle ) \supseteq \decmin(S)$
is given by
\begin{align*}
& 
 2 p\sp{*}_{1} + p\sp{*}_{2} + p\sp{*}_{3} =  
 2 p\sp{*}_{1} + p\sp{*}_{2} + p\sp{*}_{4} =
  p\sp{*}_{1} + 2 p\sp{*}_{2} + p\sp{*}_{3} =
  p\sp{*}_{1} + 2 p\sp{*}_{2} + p\sp{*}_{4} \le
  2 p\sp{*}_{1} + 2  p\sp{*}_{2}
\nonumber
 \\ & \iff
p\sp{*}_{1} = p\sp{*}_{2} \ge p\sp{*}_{3} = p\sp{*}_{4}
\iff
 p\sp{*}= (a, a, b, b)
\quad (a \ge b).
\nonumber
\end{align*}
More precisely, 
for $p\sp{*}= (a, a, b, b)$, 
we have
\begin{align}
\argmin_{x \in S} ( \langle p\sp{*}, x \rangle ) =
\begin{cases}
\{ x\sp{1}, x\sp{2},x\sp{3}, x\sp{4} \} & (a > b),
\\
\{ x\sp{1}, x\sp{2},x\sp{3}, x\sp{4}, x\sp{5} \} & (a = b).
\end{cases}
\label{Bpab2}
\end{align}

\item
On the other hand, the condition on $p\sp{*}$ for the inclusion 
$\displaystyle \argmin (\Phi_{\rm rap}[-p\sp{*}]) \supseteq \decmin(S)$
is given as follows.
By the optimality criterion for 
(unconstrained) 
minimization of a separable convex function,
we have
\begin{align*}
& x\sp{1}=(2,1,1,0) \in \argmin (\Phi_{\rm rap}[-p\sp{*}])
\\ &
\iff
\varphi(2) - \varphi(1) \le p\sp{*}_{1} \le \varphi(3) - \varphi(2), \
\\ & \phantom{\ \iff \ }
\varphi(1) - \varphi(0) \le p\sp{*}_{i} \le \varphi(2) - \varphi(1) \ \ (i=2,3), \ 
\\ & \phantom{\ \iff \ }
\varphi(0) - \varphi(-1) \le p\sp{*}_{4} \le \varphi(1) - \varphi(0)
\\ &
\iff
90 \le p\sp{*}_{1} \le 900, \ 
9 \le p\sp{*}_{2} \le 90, \ 
9 \le p\sp{*}_{3} \le 90, \ 
0.9 \le p\sp{*}_{4} \le 9;
\\
& x\sp{2}=(2,1,0,1) \in \argmin (\Phi_{\rm rap}[-p\sp{*}])
\\ &
\iff
90 \le p\sp{*}_{1} \le 900, \ 
9 \le p\sp{*}_{2} \le 90, \ 
0.9 \le p\sp{*}_{3} \le 9, \ 
9 \le p\sp{*}_{4} \le 90;
\\
& x\sp{3}=(1,2,1,0) \in \argmin (\Phi_{\rm rap}[-p\sp{*}])
\\ &
\iff
9 \le p\sp{*}_{1} \le 90, \ 
90 \le p\sp{*}_{2} \le 900, \ 
9 \le p\sp{*}_{3} \le 90, \ 
0.9 \le p\sp{*}_{4} \le 9;
\\
& x\sp{4}=(1,2,0,1) \in \argmin (\Phi_{\rm rap}[-p\sp{*}])
\\ &
\iff
9 \le p\sp{*}_{1} \le 90, \ 
90 \le p\sp{*}_{2} \le 900, \ 
0.9 \le p\sp{*}_{3} \le 9, \ 
9 \le p\sp{*}_{4} \le 90.
\end{align*}
Therefore,
\begin{align}
& \argmin (\Phi_{\rm rap}[-p\sp{*}]) \supseteq \decmin(S)
\nonumber \\ & 
\iff
p\sp{*} = (90, 90, 9, 9)
\iff
\argmin (\Phi_{\rm rap}[-p\sp{*}]) = B\sp{\circ},
\label{BpPhi0}
\end{align}
where
$B\sp{\circ}= [(1,1,0,0), (2,2,1,1)]_{\ZZ} 
 = \{ 1,2 \}\sp{2} \times \{ 0,1 \}\sp{2}$.

\item
By \eqref{Bpab2} and \eqref{BpPhi0}, we uniquely obtain
$p\sp{*} = (90, 90, 9, 9)$, with which 
the formula \eqref{fnsumargminIC6} holds with
\begin{align*}
 \argmin_{x \in S} ( \langle p\sp{*}, x \rangle ) &=
\{ x\sp{1}, x\sp{2},x\sp{3}, x\sp{4} \},
\\ 
\argmin (\Phi_{\rm rap}[-p\sp{*}]) &= B\sp{\circ}.
\end{align*}
Since
$B\sp{\circ} \supseteq S$, \eqref{fnsumargminIC6} reduces to
$\decmin(S) = \argmin_{x \in S} ( \langle p\sp{*}, x \rangle )$
in this example.
\finbox
\end{enumerate}
\end{example}

\section{Alternative Elementary Proof}
\label{SCproofAlt}

\subsection{Bounding by a unit box}
\label{SCproofAltBox}

In this section we prove the following statement,
which constitutes a part of Theorem~\ref{THdecminICunit}.

\begin{proposition}     \label{PRdecminICunit}
Let $S$ be an integrally convex set admitting a dec-min element.
For any $x, y \in \decmin (S)$, we have
$\| x - y \|_{\infty} \le 1$.
\end{proposition}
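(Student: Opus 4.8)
The plan is to prove Proposition~\ref{PRdecminICunit} directly, without invoking Fenchel duality, by arguing that if two dec-min elements differed by more than $1$ in some coordinate, then integral convexity would let us construct a strictly better (dec-smaller) point, contradicting dec-minimality. The engine for ``dec-smaller'' is Theorem~\ref{THdecminRapid}: since $x,y \in \decmin(S) = \argmin(\Phi_{\rm rap}\mid S)$, we have $\Phi_{\rm rap}(x) = \Phi_{\rm rap}(y) = \mu$, the common minimum value, and any $z \in S$ with $\Phi_{\rm rap}(z) < \mu$ is impossible. So the strategy reduces to the following: assuming $\|x-y\|_\infty \ge 2$, produce a point $z \in S$ with $\Phi_{\rm rap}(z) < \mu$.

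First I would set up the contradiction hypothesis: suppose $x, y \in \decmin(S)$ with $\|x-y\|_\infty \ge 2$. Consider the midpoint $w = (x+y)/2 \in \overline{S}$, which lies in the convex hull. By integral convexity in the form \eqref{icsetdef1}, we have $w \in \overline{S \cap {\rm N}(w)}$, so $w$ is a convex combination of integer points of $S$ lying in the integral neighborhood ${\rm N}(w)$; by \eqref{intneighbordeffloorceil} each such point has $i$th coordinate in $\{\lfloor w_i\rfloor, \lceil w_i\rceil\}$. The key numerical observation is that $\Phi_{\rm rap}$ is strictly convex (shown in the excerpt for $n\ge 2$), so the separable convex function value at the midpoint satisfies $\Phi_{\rm rap}(w) \le \tfrac12\Phi_{\rm rap}(x) + \tfrac12\Phi_{\rm rap}(y) = \mu$, and this inequality is \emph{strict} exactly when $x$ and $y$ differ in some coordinate by at least $2$ (so that rounding $w_i$ strays strictly inside the interval between $x_i$ and $y_i$ where strict convexity bites). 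The plan is to make this gap quantitative at the coordinates where $|x_i - y_i| \ge 2$.

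The main step is then to transfer the strict inequality $\Phi_{\rm rap}(w) < \mu$ — which holds for the fractional point $w$, extending $\Phi_{\rm rap}$ to the real domain as a separable convex function — down to an actual integer point $z \in S \cap {\rm N}(w)$. Here I would use that $w$ is a convex combination $w = \sum_k \lambda_k z^k$ with $z^k \in S \cap {\rm N}(w)$ and $\lambda_k > 0$; by convexity of the (real-extended) $\Phi_{\rm rap}$ we get $\min_k \Phi_{\rm rap}(z^k) \le \Phi_{\rm rap}(w) < \mu$. Since each $z^k \in S$, this contradicts the minimality of $\mu = \min\{\Phi_{\rm rap}\mid S\}$. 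The delicate point requiring care is that the midpoint argument must genuinely exploit strict convexity: I expect the main obstacle to be verifying cleanly that $\Phi_{\rm rap}(w) < \mu$ rather than merely $\le \mu$, which forces attention to the coordinates with $|x_i - y_i| \ge 2$ and to whether the real-valued interpolation of $\varphi$ inherits strict convexity from the discrete strict convexity \eqref{rapidinc}.

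A subtlety to address is that $\Phi_{\rm rap}$ was defined on $\ZZ^n$, whereas $w$ is generally non-integral, so I would first fix the convention that $\varphi$ (and hence $\Phi_{\rm rap}$) is extended to $\RR$ by piecewise-linear interpolation between consecutive integers; this extension is convex, and it is strictly convex across any integer point, which suffices for the midpoint estimate since the local neighborhood points $z^k$ are integral. With that convention the chain $\min_k \Phi_{\rm rap}(z^k) \le \Phi_{\rm rap}(w) < \mu$ is valid, and the proof closes. This keeps the argument elementary, using only the defining property \eqref{icsetdef1} of integral convexity, strict convexity of $\varphi$, and Theorem~\ref{THdecminRapid}.
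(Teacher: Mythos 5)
Your strategy is sound and it is genuinely different from the paper's own proof of Proposition~\ref{PRdecminICunit}. The paper argues purely combinatorially: it takes the midpoint $z=(x+y)/2$, writes it as a convex combination of points of $S\cap{\rm N}(z)$, and then runs a multi-step ``staircase'' induction over the level sets $V_{j}$ of $x$ (Claims~\ref{CLbox1}--\ref{CLbox2C}, powered by the cube Lemma~\ref{LMcube}) to contradict dec-minimality of $x$ directly in the order $\leq_{\rm dec}$. You instead route the contradiction through Theorem~\ref{THdecminRapid}: since $\decmin(S)=\argmin(\Phi_{\rm rap}\,|\,S)$, it suffices to exhibit one point of $S$ with strictly smaller $\Phi_{\rm rap}$-value, and integral convexity at the midpoint plus strict convexity of $\varphi$ across an interior integer breakpoint produces such a point. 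This is considerably shorter than the staircase argument and avoids Fenchel duality (used in the paper's Section~\ref{SCproofFenc} proof); invoking Theorem~\ref{THdecminRapid} is fair game, since the paper's own ``elementary'' Section~\ref{SCproofAltFace} relies on it as well.

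However, one pivotal step is misjustified as written, although your own setup makes the repair immediate. You assert $\min_{k}\Phi_{\rm rap}(z^{k})\le\Phi_{\rm rap}(w)$ ``by convexity of the (real-extended) $\Phi_{\rm rap}$.'' Convexity gives the opposite direction: Jensen yields $\Phi_{\rm rap}(w)\le\sum_{k}\lambda_{k}\Phi_{\rm rap}(z^{k})$, which only shows that \emph{some} $z^{k}$ has value at least $\Phi_{\rm rap}(w)$; for a general convex function the inequality you need can fail (take $f(t)=t^{2}$, $z^{1}=-1$, $z^{2}=1$, $w=0$, where $\min_{k}f(z^{k})=1>0=f(w)$). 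What actually saves the step is that the piecewise-linear extension $\tilde{\Phi}$ of a separable convex function is \emph{affine} on each unit cell: every $\tilde{\varphi}$ is affine on $[\lfloor w_{i}\rfloor,\lceil w_{i}\rceil]$, hence $\tilde{\Phi}$ is affine on the box $\prod_{i}[\lfloor w_{i}\rfloor,\lceil w_{i}\rceil]$, and since all $z^{k}\in{\rm N}(w)$ lie in that box, Jensen holds with \emph{equality}: $\tilde{\Phi}(w)=\sum_{k}\lambda_{k}\Phi_{\rm rap}(z^{k})\ge\min_{k}\Phi_{\rm rap}(z^{k})$. With this correction, your strict midpoint estimate $\tilde{\Phi}(w)<\mu$ (which is correct: for any $i$ with $|x_{i}-y_{i}|\ge 2$ there is an integer strictly between $x_{i}$ and $y_{i}$ at which the slope of $\tilde{\varphi}$ strictly increases, so $\tilde{\varphi}$ lies strictly below the chord at $w_{i}$) yields $\min_{k}\Phi_{\rm rap}(z^{k})<\mu$ with $z^{k}\in S$, the desired contradiction. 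You should also state the harmless reduction to $n\ge 2$, since Theorem~\ref{THdecminRapid} and the strict convexity of $\varphi$ require it; for $n=1$ the proposition is trivial because a dec-min element is then unique.
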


Recall $N= \{ 1,2,\ldots, n \}$.
We sometimes use notation
$\RR\sp{N}$ for $\RR\sp{n}$ 
to emphasize the ground set $N$.
We first prepare a general lemma,
where $\supp (z) = \{ i \in N \mid z_{i} \ne 0 \}$
for any $z \in \RR\sp{N}$.

\begin{lemma}    \label{LMcube}
Let $T \subseteq \{ 0, 1 \}\sp{N}$ and
$U \subseteq N$,
and let $y$ be a convex combination of elements of $T$, that is,
\begin{equation} \label{cl1eq1}
y = \sum_{z \in T} \lambda_{z} z,
\qquad \sum_{z \in T} \lambda_{z} = 1,
\quad \lambda_{z} > 0 \  \ (\forall z \in T).
\end{equation}
If $y_{i} = 1/2$ for all $i \in U$, 
there exists $\hat z  \in T$ satisfying 
$| \supp (\hat z) \cap U | \le \left\lfloor {|U|}/{2} \right\rfloor $.
\end{lemma}
\begin{proof}
To prove the claim by contradiction, assume that
$| \supp (z) \cap U | \ge \left\lfloor {|U|}/{2} \right\rfloor + 1 $
for every $z  \in T$.
By considering the sum of the components within $U$,
we obtain
\[
y(U) = \sum_{z \in T} \lambda_{z} z(U) .
\]
By the assumption we have $y(U) = \sum_{i \in U} y_{i} = |U|/2$, whereas
\begin{align*} 
 \sum_{z \in T} \lambda_{z} z(U)
= \sum_{z \in T} \lambda_{z} | \supp (z) \cap U |
\ge  \sum_{z \in T} \lambda_{z} ( \left\lfloor {|U|}/{2} \right\rfloor + 1) 
= \left\lfloor {|U|}/{2} \right\rfloor + 1 > |U|/2 .
\end{align*} 
This is a contradiction, proving that
$| \supp (\hat z) \cap U | \le \left\lfloor {|U|}/{2} \right\rfloor $
for some $\hat z  \in T$.
\end{proof}

To prove Proposition~\ref{PRdecminICunit} by contradiction,
suppose that 
there exist $x, y \in \decmin (S)$
with $\| x - y \|_{\infty} \ge 2$.

We first reduce our argument to the case where
$x_{i} \ne y_{i}$ for all $i \in N$.
Let $N' = \{ i \in N \mid x_i = y_i \}$. 
If $N' \ne \emptyset$,
let 
$\hat N = N \setminus N'$ and consider 
\[
\hat{S} = \{ z|_{\hat{N}} \mid z \in S, z|_{N'} = x|_{N'} \},
\qquad
\hat x = x|_{\hat N}, \qquad
\hat y = y|_{\hat N},
\]
where,
for any vector $z$ on $N$ and any subset $U$ of $N$,
$z|_{U}$ denotes the restriction of $z$ to $U$.
Then 
$\hat S$ is an integrally convex set,
$\hat x, \hat y \in \decmin(\hat S)$,
$\| \hat x - \hat y \|_{\infty} \ge 2$,
and 
$\hat x_{i} \ne \hat y_{i}$ for all $i \in \hat N$.
In the following 
we denote
$(\hat S, \hat x, \hat y)$
simply by
$(S, x, y)$
and assume that
\begin{description}
\item
[\rm (A0)]
$x_{i} \ne y_{i}$ for all $i \in N$.
\end{description}

Let
\begin{equation} \label{boxprDdef}
D = \{ i \in N \mid | x_{i} - y_{i} | \ge 2 \},
\end{equation}
which is nonempty.
Let $p$ denote the index $i \in D$
at which 
$\max (x_{i}, y_{i})$
is maximized over $D$, that is,
\begin{equation} \label{pargmax}
p \in \argmax \{  \max (x_{i}, y_{i}) \mid i \in D \}.
\end{equation}
By interchanging $x$ and $y$ if necessary,
we may assume that
\begin{description}
\item
[\rm (A1)]
$x_{p} = \max \{  x_{i} \mid i \in D \} \ge \max \{ y_{i} \mid i \in D \}$.
\end{description}
Then we have
\begin{description}
\item
[\rm (A2)]
$x_{p} \ge y_{p}+2$.
\end{description}

Let
$\alpha_{1} > \alpha_{2} > \cdots > \alpha_{r}$
denote the distinct values of the components of $x$
and define
\begin{equation} \label{Vjdef}
V_{j} = \{ i \in N \mid x_{i} = \alpha_{j} \}
\qquad
(j=1,2,\ldots,r).
\end{equation}
These subsets are pairwise disjoint and define a partition 
$N = V_{1} \cup V_{2} \cup \cdots \cup V_{r}$;
see Fig.~\ref{FGstairimage}.

\begin{figure}
\centering
\includegraphics[width=0.45\textwidth,clip]{./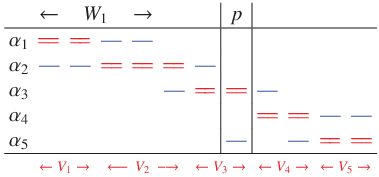}
\caption{Partition $\{ V_{j} \}$ of $N$ 
 ($x$: \RED{=\!=}, $y$: \BLU{---})}
\label{FGstairimage}
\end{figure}

Let
\begin{equation} \label{zdef}
 z = \frac{x+y}{2}.
\end{equation}
By the integral convexity of $S$,
we can represent $z$ as a convex combination of the elements of 
$S \cap {\rm N}(z)$.
That is,
\begin{equation} \label{zconvcombi}
z = \sum_{k=1}\sp{K} \lambda_{k} z\sp{k},
\qquad \sum_{k=1}\sp{K} \lambda_{k}  = 1,
\quad \lambda_{k} > 0 \ \ (k=1,2,\ldots, K)
\end{equation}
for some $\{ z\sp{1}, z\sp{2}, \ldots, z\sp{K} \} \subseteq S \cap {\rm N}(z)$.
Note that
$x \notin {\rm N}(z)$ and
$y \notin {\rm N}(z)$
since
$|x_{p} - y_{p}| \ge 2$.

\medskip

{\bf Step 1:}
As the first step, we consider 
the indices $i \in N$ 
with $\{ x_{i}, y_{i} \} = \{ \alpha_{1}, \alpha_{1}-1 \}$.
Let
\begin{equation} \label{W1def}
W_{1} = 
\{ i \in N \mid x_{i} = \alpha_{1}, \  y_{i} = \alpha_{1}-1 \}
\cup \{ i \in N \mid x_{i} = \alpha_{1}-1,  \  y_{i} = \alpha_{1}\};
\end{equation}
see Fig.~\ref{FGstairimage}.
We may have $W_{1} = \emptyset$. 
It follows from (A0) and the definition \eqref{zdef} of $z$ that
\begin{align} 
& 
z\sp{k}_{i} \le \alpha_{1}
\ \ \quad \qquad(i \in W_{1}; k=1,2,\ldots,K),
\label{zkalp1}
\\ &
z\sp{k}_{i} \le \alpha_{1}-1
\qquad(i \in N \setminus W_{1}; k=1,2,\ldots,K).
\label{zkalp11}
\end{align}

\begin{claim}    \label{CLbox1}
$p \in N \setminus V_{1}$. 
\end{claim}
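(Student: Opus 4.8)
The plan is to argue by contradiction: suppose $p \in V_{1}$, so that $x_{p} = \alpha_{1} = \max_{i} x_{i}$, and derive a violation of the dec-minimality of $x$. Since $x$ and $y$ are both dec-min, they share the same sorted vector $x{\downarrow} = y{\downarrow}$; in particular $\max_{i} y_{i} = \alpha_{1}$ and $y$ has exactly $m_{1} := |V_{1}|$ components equal to $\alpha_{1}$. Consequently every component of $x$ and of $y$ is at most $\alpha_{1}$, so $z_{i} = (x_{i}+y_{i})/2 \le \alpha_{1}$ and $z^{k}_{i} \le \alpha_{1}$ for all $i$ and $k$. Assumption (A2) gives $y_{p} \le \alpha_{1} - 2$, so $p \in V_{1} \setminus W_{1}$.

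First I would bound $|W_{1}|$. Writing $W_{1} = W_{1}^{+} \cup W_{1}^{-}$ with $W_{1}^{+} = \{ i : x_{i} = \alpha_{1},\, y_{i} = \alpha_{1}-1 \} \subseteq V_{1}$ and $W_{1}^{-} = \{ i : x_{i} = \alpha_{1}-1,\, y_{i} = \alpha_{1} \}$, the index $p$ lies in $V_{1} \setminus W_{1}^{+}$, so $|W_{1}^{+}| \le m_{1}-1$; and $W_{1}^{-} \subseteq \{ i : y_{i} = \alpha_{1} \}$ gives $|W_{1}^{-}| \le m_{1}$. Hence $|W_{1}| \le 2m_{1}-1$ and $\lfloor |W_{1}|/2 \rfloor \le m_{1}-1$. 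On $W_{1}$ we have $z_{i} = \alpha_{1} - \tfrac{1}{2}$, so subtracting the integer $\alpha_{1}-1$ turns each restriction $z^{k}|_{W_{1}}$ into a $\{0,1\}$-vector whose weighted average equals $\tfrac{1}{2}$ on every coordinate of $W_{1}$. Applying Lemma~\ref{LMcube} with $U = W_{1}$ then produces an index $\hat k$ for which $z^{\hat k}$ has at most $\lfloor |W_{1}|/2 \rfloor \le m_{1}-1$ coordinates in $W_{1}$ equal to $\alpha_{1}$; by \eqref{zkalp11} no coordinate outside $W_{1}$ attains $\alpha_{1}$, so $z^{\hat k}$ has at most $m_{1}-1$ coordinates equal to $\alpha_{1}$ in total.

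Finally I would compare $z^{\hat k}$ and $x$ in the dec order. Both vectors have all coordinates $\le \alpha_{1}$; $x$ has exactly $m_{1}$ coordinates equal to $\alpha_{1}$, whereas $z^{\hat k}$ has only $t \le m_{1}-1$ of them. Thus $z^{\hat k}{\downarrow}$ and $x{\downarrow}$ agree (both equal to $\alpha_{1}$) in their first $t$ entries, while in position $t+1$ we have $x{\downarrow}_{t+1} = \alpha_{1}$ but $z^{\hat k}{\downarrow}_{t+1} \le \alpha_{1}-1$. Hence $z^{\hat k} <_{\rm dec} x$ with $z^{\hat k} \in S$, contradicting $x \in \decmin(S)$; therefore $p \notin V_{1}$. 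The point requiring the most care is the counting bound $|W_{1}| \le 2m_{1}-1$, whose slack of exactly one unit (coming from $p \in V_{1} \setminus W_{1}$, i.e.\ from (A2)) is precisely what pushes $\lfloor |W_{1}|/2 \rfloor$ strictly below $m_{1}$ and drives the contradiction; the remainder is a routine use of Lemma~\ref{LMcube} together with \eqref{zkalp1}--\eqref{zkalp11}. I note that the cube lemma could here be replaced by a plain averaging argument, since $\sum_{k} \lambda_{k} \, |\{ i \in W_{1} : z^{k}_{i} = \alpha_{1} \}| = |W_{1}|/2 < m_{1}$ already forces some $z^{k}$ with fewer than $m_{1}$ top coordinates, but invoking the freshly proved Lemma~\ref{LMcube} keeps the presentation uniform with the later steps.
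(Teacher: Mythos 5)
Your proof is correct and follows essentially the same route as the paper's: assume $p \in V_{1}$, bound $|W_{1}| \le 2|V_{1}|-1$ using $p \notin W_{1}$ (from (A2)) and $x{\downarrow}=y{\downarrow}$, apply Lemma~\ref{LMcube} to the shifted $\{0,1\}$-vectors to find some $z^{k}$ with at most $|V_{1}|-1$ coordinates equal to $\alpha_{1}$, and conclude $z^{k} <_{\rm dec} x$ via \eqref{zkalp11}, contradicting dec-minimality. The only cosmetic differences are that you restrict to the ground set $W_{1}$ and subtract the constant $\alpha_{1}-1$ where the paper works in $\{0,1\}^{N}$ with $c=\lfloor z\rfloor$ and $U=W_{1}$, and your closing observation that the lemma could be replaced by direct averaging is accurate (it is precisely the lemma's own proof).
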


\begin{proof}
To prove by contradiction, suppose that 
$p \in V_{1}$.
Since
$x{\downarrow} = y{\downarrow}$,
we have
$| \{ i \in N \mid x_{i} = \alpha_{1} \}|
 = | \{ i \in N \mid y_{i} = \alpha_{1} \}| = |V_{1}|$,
whereas $p \notin W_{1}$ 
from (A2). Therefore,
\begin{equation} \label{W1small}
|W_{1}| \le (|V_{1}| -1) +  |V_{1}| \le 2 |V_{1}| -1 .
\end{equation}
Let $c = \left\lfloor z \right\rfloor$
and consider
$T = \{ z\sp{1} -c , z\sp{2} -c, \ldots, z\sp{K} -c \} \subseteq \{ 0, 1 \}\sp{N}$.
Since
$z_{i}-c_{i} = 1/2$ for all $i \in W_{1}$,
we can apply Lemma~\ref{LMcube}
to obtain an index $k\sp{*}$ such that
\begin{equation} \label{step1byLemma}
 | \{ i \in N \mid z\sp{k\sp{*}}_{i} = \alpha_{1} \}|
= | \supp (z\sp{k\sp{*}}-c) \cap W_{1} | 
\le \left\lfloor |W_{1}|/2 \right\rfloor .
\end{equation}
It follows from \eqref{W1small} and \eqref{step1byLemma} that
\[
 | \{ i \in N \mid z\sp{k\sp{*}}_{i} = \alpha_{1} \}|
\le \left\lfloor |W_{1}|/2 \right\rfloor 
\le \left\lfloor |V_{1}| -1/2 \right\rfloor 
= |V_{1}| -1 ,
\]
which,
together with \eqref{zkalp11},
shows $z\sp{k\sp{*}} <_{\rm dec} x$,
a contradiction to the dec-minimality of $x$ in $S$.
\end{proof}

\begin{claim}    \label{CLbox1A}
$|W_{1}| = 2 |V_{1}|$.
\end{claim}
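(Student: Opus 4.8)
The plan is to decompose $W_{1}$ into the two sets $A = \{ i \in N \mid x_{i} = \alpha_{1}, \ y_{i} = \alpha_{1}-1\}$ and $B = \{ i \in N \mid x_{i} = \alpha_{1}-1, \ y_{i} = \alpha_{1}\}$, which are disjoint (the $x$-values differ) and satisfy $W_{1} = A \cup B$ by \eqref{W1def}. Since $x, y \in \decmin(S)$ we have $x{\downarrow} = y{\downarrow}$, so $\alpha_{1}$ is also the largest component of $y$ and it occurs exactly $|V_{1}|$ times in each of $x$ and $y$. Thus it suffices to prove $|A| = |B| = |V_{1}|$, which I will get from the identifications $A = V_{1}$ and $B = \{ i \in N \mid y_{i} = \alpha_{1}\}$.

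The key input is Claim~\ref{CLbox1}, which gives $p \in N \setminus V_{1}$ and hence $x_{p} \le \alpha_{1}-1$. Combined with (A1), namely $x_{p} = \max\{ x_{i} \mid i \in D \} \ge \max\{ y_{i} \mid i \in D \}$, this forces $x_{i} \le \alpha_{1}-1$ and $y_{i} \le \alpha_{1}-1$ for every $i \in D$. Equivalently, any index whose $x$-value or $y$-value equals $\alpha_{1}$ cannot belong to $D$, i.e.\ it satisfies $|x_{i} - y_{i}| \le 1$.

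I then read off the two identifications. For $i \in V_{1}$ we have $x_{i} = \alpha_{1}$, so $i \notin D$ yields $y_{i} \ge \alpha_{1}-1$; together with $y_{i} \le \alpha_{1}$ and $y_{i} \ne \alpha_{1}$ (the latter by (A0)) this gives $y_{i} = \alpha_{1}-1$, so $i \in A$. As $A \subseteq V_{1}$ trivially, this proves $A = V_{1}$. Symmetrically, each of the $|V_{1}|$ indices with $y_{i} = \alpha_{1}$ lies outside $D$, forcing $x_{i} \ge \alpha_{1}-1$, and with $x_{i} \le \alpha_{1}$, $x_{i} \ne \alpha_{1}$ we obtain $x_{i} = \alpha_{1}-1$, so $B = \{ i \in N \mid y_{i} = \alpha_{1}\}$. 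Counting then gives $|W_{1}| = |A| + |B| = 2|V_{1}|$.

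I expect no serious obstacle once Claim~\ref{CLbox1} is available; the argument is essentially bookkeeping. The one point requiring care is the logical chain that upgrades the single inequality $x_{p} \le \alpha_{1}-1$ to the uniform bound on all of $D$ through (A1), and then the repeated use of the strict-inequality hypothesis (A0), together with $x{\downarrow} = y{\downarrow}$ (which caps both $x$ and $y$ at $\alpha_{1}$), to collapse the two a priori possible partner values $\{\alpha_{1}, \alpha_{1}-1\}$ down to exactly $\alpha_{1}-1$ on each side.
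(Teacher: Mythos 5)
Your proof is correct and takes essentially the same route as the paper's: it splits $W_{1}$ into the same two parts, identifies the first with $V_{1}$ and the second with $\{ i \in N \mid y_{i} = \alpha_{1} \}$ (each of size $|V_{1}|$ by $x{\downarrow} = y{\downarrow}$), using Claim~\ref{CLbox1} together with (A1) to keep indices of value $\alpha_{1}$ out of $D$, and (A0) to pin the partner value at $\alpha_{1}-1$. The only cosmetic difference is that you first package Claim~\ref{CLbox1} and (A1) into a uniform bound ($x_{i}, y_{i} \le \alpha_{1}-1$ for all $i \in D$), a step the paper leaves implicit.
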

\begin{proof}
Recall from \eqref{W1def} that $W_{1}$ consists of two parts.
For the first part, we have
\begin{equation}   \label{boxW1part1}
\{ i \in N \mid x_{i} = \alpha_{1}, \  y_{i} = \alpha_{1}-1 \} 
=\{ i \in N \mid x_{i} = \alpha_{1} \} =V_{1}
\end{equation}
since
$p \notin V_{1}$
by Claim~\ref{CLbox1}.
For the second part, we have
\begin{equation}   \label{boxW1part2}
 \{ i \in N \mid x_{i} = \alpha_{1}-1,  \  y_{i} = \alpha_{1}\} 
=  \{ i \in N \mid y_{i} = \alpha_{1} \} ,
\quad
 | \{ i \in N \mid y_{i} = \alpha_{1} \} | 
= |V_{1}| ,
\end{equation}
since
$\{ i \in N \mid x_{i} \le \alpha_{1}-2,  \  y_{i} = \alpha_{1}\} = \emptyset$
from the definition \eqref{pargmax} of $p$ and 
the assumption (A1).
Hence follows $|W_{1}| = 2 |V_{1}|$. 
\end{proof}

Note that 
$V_{1} \subseteq W_{1}$ and $p \notin W_{1}$
follow from 
Claims \ref{CLbox1} and \ref{CLbox1A}.

\begin{claim}    \label{CLbox1B}
For vectors $z\sp{k}$ in \eqref{zconvcombi}, we have
\begin{equation}  \label{TmemoEq7}
| \{ i \in W_{1}  \mid z\sp{k}_{i} = \alpha_{1} \}| 
=
| \{ i \in N \mid z\sp{k}_{i} = \alpha_{1} \}| \ge | V_{1}|
\qquad (k=1,2,\ldots,K).
\end{equation}
\end{claim}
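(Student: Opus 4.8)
The plan is to verify the two assertions in \eqref{TmemoEq7} separately: the equality is essentially immediate from the bound \eqref{zkalp11}, while the inequality is where the dec-minimality of $x$ enters.

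First I would establish the equality $| \{ i \in W_{1} \mid z\sp{k}_{i} = \alpha_{1} \}| = | \{ i \in N \mid z\sp{k}_{i} = \alpha_{1} \}|$. The key observation is that no index outside $W_{1}$ can attain the value $\alpha_{1}$ in $z\sp{k}$: by \eqref{zkalp11} we have $z\sp{k}_{i} \le \alpha_{1}-1$ for every $i \in N \setminus W_{1}$. Consequently $\{ i \in N \mid z\sp{k}_{i} = \alpha_{1} \} \subseteq W_{1}$, whence the two index sets, and therefore their cardinalities, coincide. This handles the equality for each $k$ at once.

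For the inequality $| \{ i \in N \mid z\sp{k}_{i} = \alpha_{1} \}| \ge |V_{1}|$, I would argue by contradiction through the lexicographic comparison. Combining \eqref{zkalp1} and \eqref{zkalp11}, every component of $z\sp{k}$ satisfies $z\sp{k}_{i} \le \alpha_{1}$; since $z\sp{k} \in S \subseteq \ZZ\sp{n}$ is integral, any component not equal to $\alpha_{1}$ is in fact at most $\alpha_{1}-1$. Write $m = | \{ i \in N \mid z\sp{k}_{i} = \alpha_{1} \}|$ and suppose $m < |V_{1}|$. Then the descending rearrangement of $z\sp{k}$ satisfies $z\sp{k}{\downarrow}_{i} = \alpha_{1}$ for $i = 1, \ldots, m$ and $z\sp{k}{\downarrow}_{m+1} \le \alpha_{1}-1$. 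On the other hand, $\alpha_{1}$ is the largest component value of $x$ and, by the definition \eqref{Vjdef}, exactly $|V_{1}|$ components of $x$ equal $\alpha_{1}$; hence $x{\downarrow}_{i} = \alpha_{1}$ for all $i \le |V_{1}|$, in particular for $i = m+1$ (as $m+1 \le |V_{1}|$). Thus $z\sp{k}{\downarrow}$ and $x{\downarrow}$ agree in positions $1, \ldots, m$ and first differ at position $m+1$, where $z\sp{k}{\downarrow}_{m+1} < x{\downarrow}_{m+1}$. This yields $z\sp{k} <_{\rm dec} x$ with $z\sp{k} \in S$, contradicting the dec-minimality of $x$. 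Hence $m \ge |V_{1}|$, which completes the argument.

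The main obstacle, such as it is, lies only in the careful bookkeeping of the descending-rearrangement comparison: one must check that the top $m$ entries of $z\sp{k}{\downarrow}$ are exactly $\alpha_{1}$ and that the integrality of $z\sp{k}$ forces a drop of at least one below $\alpha_{1}$ at position $m+1$. Once these two facts are recorded, the collision with dec-minimality is direct, and beyond the bounds \eqref{zkalp1} and \eqref{zkalp11} already available from Claims~\ref{CLbox1} and \ref{CLbox1A} no further structural property is needed.
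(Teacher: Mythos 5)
Your proof is correct and follows essentially the same route as the paper: the equality comes from the bounds \eqref{zkalp1} and \eqref{zkalp11} (the value $\alpha_{1}$ can only be attained inside $W_{1}$), and the inequality comes from observing that fewer than $|V_{1}|$ components equal to $\alpha_{1}$, with all components bounded by $\alpha_{1}$, would force $z\sp{k} <_{\rm dec} x$ and contradict dec-minimality. The only difference is that you spell out the descending-rearrangement bookkeeping that the paper leaves implicit, which is a fair elaboration rather than a new idea.
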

\begin{proof}
The equality follows from 
\eqref{zkalp1} and \eqref{zkalp11}. 
Since
$z\sp{k}_{i} \leq  \alpha_{1}$
for all $i \in N$,
the inequality
$| \{ i \in N \mid z\sp{k}_{i} = \alpha_{1} \}| < | V_{1}|$
would imply $z\sp{k} <_{\rm dec} x$,
contradicting the dec-minimality of $x$.
\end{proof}

Recall that, for any vector $z$ on $N$, $z|_{W_{1}}$ 
denotes the restriction of $z$ to $W_{1}$.

\begin{claim}    \label{CLbox1C}
$(x|_{W_{1}}){\downarrow} = (y |_{W_{1}}){\downarrow}= 
(z\sp{k}|_{W_{1}}){\downarrow}$
\quad
$(k=1,2,\ldots,K)$. 
\end{claim}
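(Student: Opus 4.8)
The plan is to show that all three sorted restrictions coincide with the vector consisting of $|V_{1}|$ entries equal to $\alpha_{1}$ followed by $|V_{1}|$ entries equal to $\alpha_{1}-1$. For the equality $(x|_{W_{1}}){\downarrow} = (y|_{W_{1}}){\downarrow}$, I would read off the structure of $W_{1}$ from the two identities \eqref{boxW1part1} and \eqref{boxW1part2} already established in the proof of Claim~\ref{CLbox1A}. They assert that on the first part of $W_{1}$, which equals $V_{1}$, we have $x_{i} = \alpha_{1}$ and $y_{i} = \alpha_{1}-1$, while on the second part, of the same cardinality $|V_{1}|$, we have $x_{i} = \alpha_{1}-1$ and $y_{i} = \alpha_{1}$. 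Thus $x|_{W_{1}}$ is a multiset of $|V_{1}|$ copies of $\alpha_{1}$ and $|V_{1}|$ copies of $\alpha_{1}-1$, and $y|_{W_{1}}$ is the very same multiset with the two parts interchanged; sorting removes the labelling, giving the first equality.

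The substance lies in the third equality, for which I would run an averaging argument on $W_{1}$. First I would note that $z_{i} = (x_{i}+y_{i})/2 = \alpha_{1}-1/2$ for every $i \in W_{1}$, by the definition \eqref{zdef} of $z$ and the definition of $W_{1}$; consequently each $z\sp{k} \in S \cap {\rm N}(z)$ satisfies $z\sp{k}_{i} \in \{ \alpha_{1}-1, \alpha_{1} \}$ on $W_{1}$. Writing $a_{k} = |\{ i \in W_{1} \mid z\sp{k}_{i} = \alpha_{1} \}|$, the remaining $|W_{1}|-a_{k}$ indices of $W_{1}$ carry the value $\alpha_{1}-1$, so $z\sp{k}(W_{1}) = |W_{1}|(\alpha_{1}-1) + a_{k}$. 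Restricting the convex combination \eqref{zconvcombi} to $W_{1}$ and using $z(W_{1}) = |W_{1}|(\alpha_{1}-1/2)$ together with $|W_{1}| = 2|V_{1}|$ from Claim~\ref{CLbox1A}, I would obtain $\sum_{k} \lambda_{k} a_{k} = |V_{1}|$.

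Now Claim~\ref{CLbox1B} already supplies the lower bound $a_{k} \ge |V_{1}|$ for every $k$. Since $\sum_{k} \lambda_{k} = 1$ with all $\lambda_{k} > 0$, the weighted average $\sum_{k} \lambda_{k} a_{k}$ can equal $|V_{1}|$ only if $a_{k} = |V_{1}|$ for every $k$. Hence each $z\sp{k}|_{W_{1}}$ consists of exactly $|V_{1}|$ copies of $\alpha_{1}$ and $2|V_{1}|-|V_{1}| = |V_{1}|$ copies of $\alpha_{1}-1$, which is the same multiset as $x|_{W_{1}}$; sorting yields $(z\sp{k}|_{W_{1}}){\downarrow} = (x|_{W_{1}}){\downarrow}$ and completes the chain of equalities.

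I expect the main obstacle to be recognizing the correct quantity to average and why it is pinned down exactly. The inequality from Claim~\ref{CLbox1B} only bounds the number of $\alpha_{1}$-valued components from below, and it is the \emph{exact} component-sum identity $z(W_{1}) = |W_{1}|(\alpha_{1}-1/2)$ — available precisely because $z_{i}$ is forced to the half-integer $\alpha_{1}-1/2$ on \emph{all} of $W_{1}$ — that furnishes the matching constraint on the weighted average. The squeeze between $a_{k} \ge |V_{1}|$ and $\sum_{k} \lambda_{k} a_{k} = |V_{1}|$ is what forces equality term by term; once this is in hand the sorted-vector identity is immediate.
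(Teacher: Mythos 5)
Your proposal is correct and follows essentially the same route as the paper: both arguments compute the component sum $z(W_{1}) = |W_{1}|(\alpha_{1}-1/2)$, restrict the convex combination \eqref{zconvcombi} to $W_{1}$, invoke the lower bound of Claim~\ref{CLbox1B} together with $|W_{1}|=2|V_{1}|$ from Claim~\ref{CLbox1A}, and squeeze to force exactly $|V_{1}|$ components of each $z\sp{k}$ on $W_{1}$ to equal $\alpha_{1}$, finishing with \eqref{boxW1part1} and \eqref{boxW1part2}. Your explicit parametrization by $a_{k}$ is just an algebraic rearrangement of the paper's inequality-versus-exact-value comparison, so there is nothing to add.
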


\begin{proof}
Since
$z_{i} = \alpha_{1} - 1/2$ for all $i \in W_{1}$
and $|W_{1}| = 2 |V_{1}|$ 
by Claim \ref{CLbox1A},
we have
\begin{align*}
 z(W_{1}) &= |W_{1}| (\alpha_{1} - 1/2) = (2 \alpha_{1} -1) |V_{1}| .
\end{align*}
On the other hand, it follows from
\eqref{zconvcombi} and
Claims \ref{CLbox1A} and \ref{CLbox1B}
that
\begin{align*}
 z(W_{1}) &= 
\sum_{k=1}\sp{K} \lambda_{k} z\sp{k}(W_{1})
\ge \sum_{k=1}\sp{K} \lambda_{k} (\alpha_{1} |V_{1}| + (\alpha_{1}-1) |V_{1}|)
= (2 \alpha_{1} -1 ) |V_{1}|.
\end{align*}
Therefore,
equality holds in \eqref{TmemoEq7},
that is,
\begin{equation*} 
 | \{ i \in W_{1} \mid z\sp{k}_{i} = \alpha_{1} \}|
= | \{ i \in W_{1} \mid z\sp{k}_{i} = \alpha_{1} -1 \}|
= | V_{1}|
\qquad (k=1,2,\ldots,K).
\end{equation*} 
Combining this with
\eqref{boxW1part1} and \eqref{boxW1part2},
we obtain
$(x|_{W_{1}}){\downarrow} = (y |_{W_{1}}){\downarrow}= 
(z\sp{k}|_{W_{1}}){\downarrow}$.
\end{proof}

By Claim~\ref{CLbox1C}, we may concentrate on 
the components of the vectors
within $N \setminus W_{1}$.

\medskip

{\bf Step 2:}
As the second step, we consider 
the indices $i \in N \setminus W_{1}$
with $\{ x_{i}, y_{i} \} = \{ \alpha_{2}, \alpha_{2}-1 \}$.
Define
$N' = N \setminus W_{1}$ and
\begin{equation} \label{W2def}
W_{2} = 
\{ i \in N' \mid x_{i} = \alpha_{2}, \  y_{i} = \alpha_{2}-1 \}
\cup \{ i \in  N' \mid x_{i} = \alpha_{2}-1,  \  y_{i} = \alpha_{2}\}.
\end{equation}
Similarly to 
\eqref{zkalp1} and \eqref{zkalp11}, we have
\begin{align} 
& 
z\sp{k}_{i} \le \alpha_{2}
\ \ \quad \qquad(i \in W_{2}; k=1,2,\ldots,K),
\label{zkalp2}
\\ &
z\sp{k}_{i} \le \alpha_{2}-1
\qquad(i \in N' \setminus W_{2}; k=1,2,\ldots,K).
\label{zkalp21}
\end{align}

\begin{claim}    \label{CLbox2}
$p \in N' \setminus V_{2}$. 
\end{claim}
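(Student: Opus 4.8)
The plan is to run, one level down, the very argument used for Claim~\ref{CLbox1}, with $N' = N \setminus W_{1}$ in place of $N$ and the second value $\alpha_{2}$ in place of $\alpha_{1}$. The inclusion $p \in N'$ needs no new work: it was already recorded, right after Claim~\ref{CLbox1A}, that $p \notin W_{1}$. So the entire content of the claim is $p \notin V_{2}$, which I would prove by contradiction. Before doing so I would transfer the outcome of Step~1 to $N'$: combining Claim~\ref{CLbox1C} (on $W_{1}$ the vectors $x$, $y$ and every $z^{k}$ carry one and the same multiset of values, namely $|V_{1}|$ copies each of $\alpha_{1}$ and $\alpha_{1}-1$) with the global identity $x{\downarrow} = y{\downarrow}$, one obtains $(x|_{N'}){\downarrow} = (y|_{N'}){\downarrow}$. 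Since $V_{1} \subseteq W_{1}$ forces $x_{i} \le \alpha_{2}$ for every $i \in N'$, the value $\alpha_{2}$ is the common maximum of $x|_{N'}$ and $y|_{N'}$, and these two restrictions share the same number, say $m$, of components equal to $\alpha_{2}$.

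Now suppose, towards a contradiction, that $p \in V_{2}$, so $x_{p} = \alpha_{2}$ and, by the previous paragraph, $p \in V_{2} \cap N'$ and $m \ge 1$. The first task is the counting bound $|W_{2}| \le 2m - 1$. The first part of $W_{2}$ lies in $\{ i \in N' \mid x_{i} = \alpha_{2} \}$ but omits $p$, because (A2) gives $y_{p} \le \alpha_{2} - 2 \neq \alpha_{2} - 1$; hence it has at most $m - 1$ elements. The second part lies in $\{ i \in N' \mid y_{i} = \alpha_{2} \}$, which has exactly $m$ elements. With $c = \lfloor z \rfloor$ and $T = \{ z^{1} - c, \ldots, z^{K} - c \} \subseteq \{0,1\}^{N}$ (a valid reduction since each $z^{k} \in {\rm N}(z)$), and using $(z-c)_{i} = \tfrac{1}{2}$ on $W_{2}$, I would apply Lemma~\ref{LMcube} with $U = W_{2}$ to extract an index $k^{*}$ with $|\{ i \in W_{2} \mid z^{k^{*}}_{i} = \alpha_{2} \}| \le \lfloor |W_{2}|/2 \rfloor \le m - 1$. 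Since \eqref{zkalp21} prevents any index of $N' \setminus W_{2}$ from reaching $\alpha_{2}$, the vector $z^{k^{*}}$ has at most $m - 1$ components equal to $\alpha_{2}$ within the whole of $N'$.

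The remaining, and I expect genuinely delicate, step is to turn this deficit into $z^{k^{*}} <_{\rm dec} x$, contradicting the dec-minimality of $x$. Unlike in Claim~\ref{CLbox1}, here $\alpha_{2}$ is not the global maximum, and in fact the Step~1 conclusions force $\alpha_{2} = \alpha_{1} - 1$, so the $\alpha_{2}$-components living in $N'$ and the lower $W_{1}$-components can take the same value; one therefore cannot read off the comparison from the $\alpha_{2}$-block alone. I would instead compare $x$ and $z^{k^{*}}$ through the threshold counts $n_{\ge \beta}(w) = |\{ i \mid w_{i} \ge \beta \}|$, invoking the elementary fact that $u <_{\rm dec} v$ holds exactly when, at the largest $\beta$ with $n_{\ge \beta}(u) \neq n_{\ge \beta}(v)$, one has $n_{\ge \beta}(u) < n_{\ge \beta}(v)$. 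For every $\beta > \alpha_{2}$ the counts of $x$ and $z^{k^{*}}$ coincide, because on $N'$ both vectors are $\le \alpha_{2}$ (for $z^{k^{*}}$ by \eqref{zkalp2} and \eqref{zkalp21}) and so contribute nothing, while on $W_{1}$ they share the same multiset by Claim~\ref{CLbox1C}. At $\beta = \alpha_{2}$ the $W_{1}$-contributions still match, but the $N'$-contribution drops from $m$ for $x$ to at most $m - 1$ for $z^{k^{*}}$. Hence $\alpha_{2}$ is the top threshold of disagreement and there $z^{k^{*}}$ is strictly smaller, yielding $z^{k^{*}} <_{\rm dec} x$ and the desired contradiction. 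The crux of the whole argument is this last comparison: everything else replays Step~1, and the only real care is needed to handle the coincidence $\alpha_{2} = \alpha_{1}-1$, which the threshold-count formulation disposes of uniformly.
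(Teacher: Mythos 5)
Your proof is correct and follows essentially the same route as the paper's: assume $p \in N' \cap V_{2}$, use (A2) to bound $|W_{2}| \le 2\,|N' \cap V_{2}| - 1$, apply Lemma~\ref{LMcube} to the shifted vectors $z^{k} - c$ with $U = W_{2}$, and conclude $z^{k^{*}} <_{\rm dec} x$, contradicting the dec-minimality of $x$. The only differences are cosmetic: you invoke the lemma on the full ground set $N$ rather than on its restriction to $N'$, and your threshold-count formulation spells out the final comparison that the paper states tersely (a welcome clarification, since $\alpha_{2} = \alpha_{1}-1$ does make values coincide across $W_{1}$ and $N'$).
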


\begin{proof}
We have $p \notin W_{1}$ as we noted before Claim~\ref{CLbox1B}. 
Suppose that $p \in N' \cap V_{2}$,
from which we intend to derive a contradiction.
By $x{\downarrow} = y{\downarrow}$ and
$(x|_{W_{1}}){\downarrow} = (y |_{W_{1}}){\downarrow}$ 
in Claim~\ref{CLbox1C},
we have
$| \{ i \in N' \mid x_{i} = \alpha_{2} \}|
 = | \{ i \in N' \mid y_{i} = \alpha_{2} \}| = |N' \cap V_{2}|$,
whereas
$p \notin W_{2}$ 
from (A2). Therefore,
\begin{equation} \label{W2small}
|W_{2}| \le (|N' \cap V_{2}| -1) +  |N' \cap V_{2}| \le 2 |N' \cap V_{2}| -1 .
\end{equation}
Consider
$T' = \{ (z\sp{k} -c)|_{N'} \mid k=1,2,\ldots, K \} \subseteq \{ 0, 1 \}\sp{N'}$,
where $c = \left\lfloor z \right\rfloor$.
Since
$z_{i}-c_{i} = 1/2$ for all $i \in W_{2}$,
we can apply Lemma~\ref{LMcube}
to obtain an index $k\sp{*}$ such that
\begin{equation} \label{step2byLemma}
| \{ i \in N' \mid z\sp{k\sp{*}}_{i} = \alpha_{2} \}|
 = | \supp ((z\sp{k\sp{*}}-c)|_{N'}) \cap W_{2} | 
\le \left\lfloor |W_{2}|/2 \right\rfloor .
\end{equation}
It follows from \eqref{W2small} and \eqref{step2byLemma} that
\[
| \{ i \in N' \mid z\sp{k\sp{*}}_{i} = \alpha_{2} \}|
\le \left\lfloor |W_{2}|/2 \right\rfloor 
\le \left\lfloor |N' \cap V_{2}| -1/2 \right\rfloor  
= |N' \cap V_{2}| -1 .
\]
Combining this with 
$(x|_{W_{1}}){\downarrow} = (z\sp{k\sp{*}}|_{W_{1}}){\downarrow}$
in Claim~\ref{CLbox1C}
and \eqref{zkalp21},
we obtain $z\sp{k\sp{*}} <_{\rm dec} x$,
a contradiction to the dec-minimality of $x$ in $S$.
\end{proof}

\begin{claim}    \label{CLbox2A}
$|W_{2}| = 2 |N' \cap V_{2}|$.
\end{claim}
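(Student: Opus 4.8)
The plan is to reproduce the template of Claim~\ref{CLbox1A} essentially verbatim, now working inside $N' = N \setminus W_1$ with $\alpha_2$ playing the role that $\alpha_1$ played in Step~1, and invoking Claim~\ref{CLbox2} wherever Step~1 invoked Claim~\ref{CLbox1}. The preparatory observation is that $x{\downarrow} = y{\downarrow}$ together with $(x|_{W_1}){\downarrow} = (y|_{W_1}){\downarrow}$ from Claim~\ref{CLbox1C} gives $(x|_{N'}){\downarrow} = (y|_{N'}){\downarrow}$; in particular the value $\alpha_2$ occurs the same number of times, namely $|N' \cap V_2|$ times, in $x|_{N'}$ and in $y|_{N'}$. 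Moreover $V_1 \subseteq W_1$ (noted right after Claim~\ref{CLbox1A}), so no index of $N'$ carries the value $\alpha_1$ under $x$, whence $x_i \le \alpha_2$ and, by the profile equality, $y_i \le \alpha_2$ for every $i \in N'$.

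Next I would split $W_2$ according to its definition \eqref{W2def} into $A = \{ i \in N' \mid x_i = \alpha_2,\ y_i = \alpha_2 - 1 \}$ and $B = \{ i \in N' \mid x_i = \alpha_2 - 1,\ y_i = \alpha_2 \}$, and prove $A = \{ i \in N' \mid x_i = \alpha_2 \} = N' \cap V_2$ and $B = \{ i \in N' \mid y_i = \alpha_2 \}$, the latter of cardinality $|N' \cap V_2|$ by the profile equality above. Summing the two cardinalities then yields $|W_2| = 2 |N' \cap V_2|$.

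The substance of both identities is a ``no gap of size $\ge 2$ at the top surviving level of $N'$'' statement, argued by contradiction exactly as in Step~1. For $A$: if some $i \in N'$ had $x_i = \alpha_2$ but $y_i \le \alpha_2 - 2$, then $i \in D$, so $x_p = \max\{ x_j \mid j \in D \} \ge \alpha_2$ by (A1); but Claim~\ref{CLbox2} gives $p \in N' \setminus V_2$, and since $p \in N'$ forbids the value $\alpha_1$ we get $x_p \le \alpha_2 - 1$, a contradiction. With (A0) and $y_i \le \alpha_2$ this forces $y_i = \alpha_2 - 1$. For $B$: if some $i \in N'$ had $y_i = \alpha_2$ but $x_i \le \alpha_2 - 2$, then $i \in D$ with $\max(x_i, y_i) = \alpha_2$, so the choice \eqref{pargmax} of $p$ gives $\max(x_p, y_p) \ge \alpha_2$, and (A1) turns this into $x_p \ge \alpha_2$, again contradicting $x_p \le \alpha_2 - 1$; with (A0) this forces $x_i = \alpha_2 - 1$.

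The only genuinely new ingredient relative to Step~1 is the bookkeeping in passing from $N$ to $N'$: one must verify that the sorted-profile equality descends to the restriction to $N'$ and that $\alpha_2$ is the top value still present in $N'$, so that the two exclusion arguments apply unchanged. I expect this reduction to $N'$, and not either exclusion argument, to be the only place requiring care, since once Claim~\ref{CLbox2} supplies the bound $x_p \le \alpha_2 - 1$ the exclusions are mechanical, exactly as Claim~\ref{CLbox1} supplied $x_p < \alpha_1$ in Step~1.
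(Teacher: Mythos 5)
Your proposal is correct and follows the paper's proof essentially verbatim: the same decomposition of $W_{2}$ into the two parts of \eqref{W2def}, the identification of the first part with $N' \cap V_{2}$ via Claim~\ref{CLbox2}, and the identification of the second part with $\{ i \in N' \mid y_{i} = \alpha_{2} \}$ (of cardinality $|N' \cap V_{2}|$) via the definition \eqref{pargmax} of $p$ and assumption (A1). The only difference is that you spell out the contradiction arguments and the descent of the sorted-profile equality to $N'$, which the paper leaves implicit.
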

\begin{proof}
Recall from \eqref{W2def} that $W_{2}$ consists of two parts.
For the first part we have
\begin{equation}   \label{boxW2part1}
 \{ i \in N' \mid x_{i} = \alpha_{2}, \  y_{i} = \alpha_{2}-1 \} 
 =\{ i \in N' \mid x_{i} = \alpha_{2} \} = N' \cap V_{2}
\end{equation}
since
$p \in N' \setminus V_{2}$
by Claim~\ref{CLbox2}.
For the second part, we have 
\begin{align}   
& \{ i \in N' \mid x_{i} = \alpha_{2}-1,  \  y_{i} = \alpha_{2}\} 
=  \{ i \in N' \mid y_{i} = \alpha_{2} \} ,
\label{boxW2part2a}
\\ &
 | \{ i \in N' \mid y_{i} = \alpha_{2} \} | 
= |N' \cap V_{2}| 
\label{boxW2part2b},
\end{align}
since
$\{ i \in N' \mid x_{i} \le \alpha_{2}-2,  \  y_{i} = \alpha_{2}\} = \emptyset$
from the definition \eqref{pargmax} of $p$ and 
the assumption (A1).
Hence follows 
$|W_{2}| = 2 |N' \cap V_{2}|$.
\end{proof}

Note that 
$N' \cap V_{2} \subseteq W_{2}$ and $p \notin W_{2}$
follow from Claims \ref{CLbox2} and \ref{CLbox2A}.

\begin{claim}    \label{CLbox2B}
For vectors $z\sp{k}$ in \eqref{zconvcombi}, we have
\begin{equation} \label{TmemoEq7B}
| \{ i \in W_{2}  \mid z\sp{k}_{i} = \alpha_{2} \}| 
=
| \{ i \in N'
 \mid z\sp{k}_{i} = \alpha_{2} \}| \ge | N' \cap V_{2}|
\qquad (k=1,2,\ldots,K).
\end{equation}
\end{claim}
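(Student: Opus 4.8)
This statement is the exact analog, one level down, of Claim~\ref{CLbox1B}: the quadruple $(W_{2}, N', \alpha_{2}, N' \cap V_{2})$ here plays the role that $(W_{1}, N, \alpha_{1}, V_{1})$ played there, the only new ingredient being that the components on $W_{1}$ must now be controlled through Claim~\ref{CLbox1C}. The plan is therefore to imitate the proof of Claim~\ref{CLbox1B} in its two halves. The equality should be immediate: for $i \in N' \setminus W_{2}$ the bound \eqref{zkalp21} gives $z\sp{k}_{i} \le \alpha_{2} - 1 < \alpha_{2}$, so any index of $N'$ at which $z\sp{k}$ attains $\alpha_{2}$ must already lie in $W_{2}$; since $W_{2} \subseteq N'$, this forces $|\{ i \in W_{2} \mid z\sp{k}_{i} = \alpha_{2} \}| = |\{ i \in N' \mid z\sp{k}_{i} = \alpha_{2} \}|$, just as the equality in Claim~\ref{CLbox1B} came from \eqref{zkalp1} and \eqref{zkalp11}.

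For the inequality I would argue by contradiction, supposing $|\{ i \in N' \mid z\sp{k}_{i} = \alpha_{2} \}| < |N' \cap V_{2}|$ for some $k$ and deducing $z\sp{k} <_{\rm dec} x$, contradicting the dec-minimality of $x$. The convenient device is the threshold count $n_{\ge t}(v) = |\{ i \in N \mid v_{i} \ge t \}|$: for integer vectors, $v <_{\rm dec} w$ holds exactly when, at the largest $t$ with $n_{\ge t}(v) \ne n_{\ge t}(w)$, one has $n_{\ge t}(v) < n_{\ge t}(w)$. I would evaluate these counts for $x$ and $z\sp{k}$ at $t = \alpha_{2}$ by splitting $N = W_{1} \cup N'$. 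On $W_{1}$, Claim~\ref{CLbox1C} says $z\sp{k}|_{W_{1}}$ and $x|_{W_{1}}$ share the same sorted values (namely $|V_{1}|$ copies of $\alpha_{1}$ and $|V_{1}|$ of $\alpha_{1}-1$), each of which is $\ge \alpha_{1}-1 \ge \alpha_{2}$ because $\alpha_{1}, \alpha_{2}$ are distinct integers; hence $W_{1}$ contributes $|W_{1}| = 2|V_{1}|$ to both counts. On $N'$ both vectors are bounded above by $\alpha_{2}$ (for $x$ because $V_{1} \subseteq W_{1}$ leaves no $\alpha_{1}$ there; for $z\sp{k}$ by \eqref{zkalp2} and \eqref{zkalp21}), so at $t = \alpha_{2}$ the $N'$ contributions are $|N' \cap V_{2}|$ for $x$ and $|\{ i \in N' \mid z\sp{k}_{i} = \alpha_{2} \}|$ for $z\sp{k}$, the latter strictly smaller by hypothesis. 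Thus $n_{\ge \alpha_{2}}(z\sp{k}) < n_{\ge \alpha_{2}}(x)$, whereas for every $t > \alpha_{2}$ the $N'$ contributions vanish and the $W_{1}$ contributions agree by Claim~\ref{CLbox1C}, giving $n_{\ge t}(z\sp{k}) = n_{\ge t}(x)$. Hence $\alpha_{2}$ is the largest threshold of disagreement, and $z\sp{k} <_{\rm dec} x$ follows.

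The routine parts are the two count evaluations; the care goes into the bookkeeping that licenses the threshold comparison — namely that every value occurring on $W_{1}$ is $\ge \alpha_{2}$ (this is exactly where the integer gap $\alpha_{1} - 1 \ge \alpha_{2}$ is used) and that neither $x$ nor $z\sp{k}$ exceeds $\alpha_{2}$ on $N'$, so that no uncounted component can interfere at thresholds $\ge \alpha_{2}$. This is the step I expect to be the main obstacle, since it is what distinguishes the present claim from Claim~\ref{CLbox1B}: there the bound $z\sp{k} \le \alpha_{1}$ held on all of $N$ outright, whereas here the high values on $W_{1}$ are handled only indirectly, through the sorted-value identity of Claim~\ref{CLbox1C}. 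If one wished to avoid the threshold characterization, the same conclusion could be read off by interleaving the sorted vectors directly, but the threshold formulation packages that bookkeeping most cleanly.
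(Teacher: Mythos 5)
Your proof is correct and takes essentially the same route as the paper's: the equality is obtained from \eqref{zkalp2} and \eqref{zkalp21}, and the inequality by contradiction from the bound $z\sp{k}_{i} \le \alpha_{2}$ on $N'$ combined with $(x|_{W_{1}}){\downarrow} = (z\sp{k}|_{W_{1}}){\downarrow}$ from Claim~\ref{CLbox1C}, yielding $z\sp{k} <_{\rm dec} x$ against dec-minimality. Your threshold-count bookkeeping simply makes explicit the lexicographic comparison that the paper's proof asserts in one line.
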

\begin{proof}
The equality follows from \eqref{zkalp2} and \eqref{zkalp21}. 
Since
$z\sp{k}_{i} \leq  \alpha_{2}$
for all $i \in N'$ and
$(x|_{W_{1}}){\downarrow} = (z\sp{k}|_{W_{1}}){\downarrow}$
by Claim~\ref{CLbox1C},
the inequality
$| \{ i \in N' \mid z\sp{k}_{i} = \alpha_{2} \}| < | N' \cap V_{2}|$
would imply 
 $z\sp{k} <_{\rm dec} x$,
contradicting the dec-minimality of $x$.
\end{proof}

\begin{claim}    \label{CLbox2C}
$(x|_{W_{2}}){\downarrow} = (y |_{W_{2}}){\downarrow}= 
(z\sp{k}|_{W_{2}}){\downarrow}$
\quad $(k=1,2,\ldots,K)$. 
\end{claim}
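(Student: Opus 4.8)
The plan is to replay the argument of Claim~\ref{CLbox1C} almost verbatim, with $\alpha_{1}, W_{1}, V_{1}$ replaced throughout by $\alpha_{2}, W_{2}, N' \cap V_{2}$; the whole point is to evaluate $z(W_{2})$ in two ways and force them to coincide. First I would compute $z(W_{2})$ directly. By the definition \eqref{W2def} of $W_{2}$, every $i \in W_{2}$ satisfies $\{ x_{i}, y_{i} \} = \{ \alpha_{2}, \alpha_{2}-1 \}$, so $z_{i} = (x_{i}+y_{i})/2 = \alpha_{2}-1/2$ by \eqref{zdef}. Combined with $|W_{2}| = 2 |N' \cap V_{2}|$ from Claim~\ref{CLbox2A}, this gives
\[
z(W_{2}) = |W_{2}| (\alpha_{2}-1/2) = (2\alpha_{2}-1)|N' \cap V_{2}|.
\]

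Next I would bound $z(W_{2})$ from below through the convex combination \eqref{zconvcombi}. For $i \in W_{2}$ the membership $z\sp{k} \in {\rm N}(z)$ confines $z\sp{k}_{i}$ to $\{ \alpha_{2}-1, \alpha_{2} \}$, so, writing $a_{k}$ for the number of indices $i \in W_{2}$ with $z\sp{k}_{i}=\alpha_{2}$ and using $|W_{2}| = 2|N' \cap V_{2}|$, one gets $z\sp{k}(W_{2}) = a_{k} + 2(\alpha_{2}-1)|N' \cap V_{2}|$. Claim~\ref{CLbox2B} supplies $a_{k} \ge |N' \cap V_{2}|$, whence $z\sp{k}(W_{2}) \ge (2\alpha_{2}-1)|N' \cap V_{2}|$, and averaging with the weights $\lambda_{k}$ yields the same lower bound for $z(W_{2})$.

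Since the two evaluations coincide, equality must hold for every $k$, forcing $a_{k} = |N' \cap V_{2}|$; thus each $z\sp{k}|_{W_{2}}$ carries exactly $|N' \cap V_{2}|$ entries equal to $\alpha_{2}$ and $|N' \cap V_{2}|$ entries equal to $\alpha_{2}-1$. I would then read off the profiles of $x|_{W_{2}}$ and $y|_{W_{2}}$ from \eqref{boxW2part1} and \eqref{boxW2part2a}--\eqref{boxW2part2b}: on $W_{2}$ each of $x$ and $y$ likewise has exactly $|N' \cap V_{2}|$ entries equal to $\alpha_{2}$ and $|N' \cap V_{2}|$ equal to $\alpha_{2}-1$. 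Hence all three restrictions share the common sorted vector $(\alpha_{2},\ldots,\alpha_{2},\alpha_{2}-1,\ldots,\alpha_{2}-1)$, giving $(x|_{W_{2}}){\downarrow} = (y|_{W_{2}}){\downarrow} = (z\sp{k}|_{W_{2}}){\downarrow}$.

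I do not expect a genuine obstacle: both the two-sided evaluation of $z(W_{2})$ and the equality-forcing step already appear in Claim~\ref{CLbox1C}, so the task is purely bookkeeping. The only point demanding care is that the counting is now performed on the reduced ground set $N' = N \setminus W_{1}$ rather than on all of $N$, but this is precisely what Claims~\ref{CLbox2A} and~\ref{CLbox2B} were arranged to absorb, and Claim~\ref{CLbox1C} guarantees that the part of the vectors sitting on $W_{1}$ has already been matched and can be ignored.
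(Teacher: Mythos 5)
Your proposal is correct and follows essentially the same route as the paper's proof: both evaluate $z(W_{2})$ in two ways, once directly as $(2\alpha_{2}-1)|N' \cap V_{2}|$ and once as a convex combination bounded below via Claims~\ref{CLbox2A} and~\ref{CLbox2B}, then force equality (using $\lambda_{k}>0$) to pin down the exact profile of each $z\sp{k}|_{W_{2}}$ and match it with the profiles of $x|_{W_{2}}$ and $y|_{W_{2}}$ obtained from \eqref{boxW2part1}--\eqref{boxW2part2b}. Your only deviation is making explicit that $z\sp{k}_{i} \in \{\alpha_{2}-1, \alpha_{2}\}$ for $i \in W_{2}$ (which the paper leaves implicit in its lower bound), a harmless clarification.
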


\begin{proof}
Since
$z_{i} = \alpha_{2} - 1/2$ for all $i \in W_{2}$
and 
$|W_{2}| = 2 |N' \cap V_{2}|$
by Claim \ref{CLbox2A},
we have
\begin{align*}
 z(W_{2}) &= |W_{2}| (\alpha_{2} - 1/2) 
= (2 \alpha_{2} - 1) |N' \cap V_{2}|.
\end{align*}
On the other hand, it follows from
\eqref{zconvcombi} and
Claims \ref{CLbox2A} and \ref{CLbox2B}
that
\begin{align*}
 z(W_{2}) &= 
\sum_{k=1}\sp{K} \lambda_{k} z\sp{k}(W_{2})
\ge \sum_{k=1}\sp{K} \lambda_{k} 
(\alpha_{2} |N' \cap V_{2}| + (\alpha_{2}-1) |N' \cap V_{2}|)
= (2 \alpha_{2} -1 )|N' \cap V_{2}| .
\end{align*}
Therefore,
equality holds in \eqref{TmemoEq7B},
that is,
\begin{equation*}  
 | \{ i \in W_{2} \mid z\sp{k}_{i} = \alpha_{2} \}|
= | \{ i \in W_{2} \mid z\sp{k}_{i} = \alpha_{2} -1 \}|
= | N' \cap V_{2}|
\qquad (k=1,2,\ldots,K).
\end{equation*} 
Combining this with
\eqref{boxW2part1},
\eqref{boxW2part2a},  and \eqref{boxW2part2b},
we obtain
$(x|_{W_{2}}){\downarrow} = (y |_{W_{2}}){\downarrow}= 
(z\sp{k}|_{W_{2}}){\downarrow}$.
\end{proof}

In Steps 1 and 2, 
we have shown $p \notin V_{1}$
(Claim~\ref{CLbox1})
and $p \notin W_{1} \cup V_{2}$ (Claim~\ref{CLbox2}).
By Claims \ref{CLbox1C} and \ref{CLbox2C},
we may go on to Step 3 concentrating on 
the components 
of the vectors
$x$, $y$, and $z\sp{k}$ \ $(k=1,2,\ldots,K)$
within $N' \setminus W_{2} = N \setminus (W_{1} \cup W_{2})$,
where we can show
$p \notin W_{1} \cup W_{2} \cup V_{3}$.
Continuing this way until Step $r$,
we obtain
$p \notin (W_{1} \cup W_{2}  \cup \cdots \cup W_{r-1}) \cup V_{r} = N$,
which is a contradiction.
This completes the proof of
Proposition~\ref{PRdecminICunit}.

\subsection{Determining a face}
\label{SCproofAltFace}

We show how to construct the face $F$ in Theorem~\ref{THdecminICunit}.
We rely on the convex characterization 
of dec-min elements in Theorem~\ref{THdecminRapid}
and Farkas' lemma,
while avoiding using the Fenchel-type duality (Theorem~\ref{THfencICsetsep}).

We first state a variant of Farkas' lemma.

\begin{lemma}     \label{LMfarkasvarIC}
For any matrix $C$ and vector $d$,
the following conditions 
{\rm (a)} and {\rm (b)} are equivalent:
\begin{enumerate}
\item[{\rm (a)}]
There exists a vector $q$ that satisfies $C q \ge d$.

\item[{\rm (b)}]
There exists no vector $r$ that satisfies 
\begin{equation} \label{farkasD}
r\sp{\top} C = \veczero\sp{\top}, 
\quad
r \ge \veczero, 
\quad
r\sp{\top} \vecone = 1,
\quad
r\sp{\top} d > 0.
\end{equation}
\end{enumerate}
\end{lemma}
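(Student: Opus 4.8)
The plan is to deduce this variant directly from the standard inequality form of Farkas' lemma, since conditions (a) and (b) are essentially a repackaging of one classical theorem of the alternative. The mathematical content is carried entirely by that classical result, and what remains is bookkeeping with sign conventions and the normalization constraint.

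First I would invoke the inequality version of Farkas' lemma in the form: the system $Ax \le b$ (with $x$ a free vector) has a solution if and only if every $y$ with $y \ge \veczero$ and $y\sp{\top} A = \veczero\sp{\top}$ satisfies $y\sp{\top} b \ge 0$. I then specialize it to the system in (a) by rewriting $Cq \ge d$ as $(-C)q \le -d$, i.e. taking $A = -C$, $b = -d$, and $x = q$. Translating the alternative back through these substitutions shows that (a) holds if and only if every $r$ with $r \ge \veczero$ and $r\sp{\top} C = \veczero\sp{\top}$ satisfies $r\sp{\top} d \le 0$. Negating, (a) fails precisely when there exists $r \ge \veczero$ with $r\sp{\top} C = \veczero\sp{\top}$ and $r\sp{\top} d > 0$; note this is the system \eqref{farkasD} \emph{without} the normalization $r\sp{\top} \vecone = 1$.

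It then remains to reconcile this unnormalized alternative with the normalized system \eqref{farkasD}, which is the only point needing a small argument. Any $r$ with $r \ge \veczero$ and $r\sp{\top} d > 0$ must be nonzero, so $r\sp{\top} \vecone = \sum_{i} r_{i} > 0$; dividing by the positive scalar $r\sp{\top} \vecone$ yields a vector that still satisfies $r \ge \veczero$, $r\sp{\top} C = \veczero\sp{\top}$, and $r\sp{\top} d > 0$ (all of these being preserved under positive scaling), and in addition meets $r\sp{\top} \vecone = 1$. Conversely, every solution of \eqref{farkasD} is in particular an unnormalized certificate. Hence the existence of an unnormalized certificate is equivalent to the existence of a solution of \eqref{farkasD}, so (a) fails if and only if (b) fails, which is the claimed equivalence of (a) and (b).

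I expect no genuine obstacle here: the step most prone to error is matching signs when passing between the $Cq \ge d$ form and the $Ax \le b$ form of Farkas' lemma, and the one substantive observation is that the normalization $r\sp{\top} \vecone = 1$ may be imposed without loss of generality because any feasible $r$ is automatically nonzero.
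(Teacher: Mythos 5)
Your proposal is correct and takes essentially the same route as the paper: both invoke the inequality form of Farkas' lemma (the paper cites Schrijver's Corollary~7.1e), apply it with the substitution $(A,b)=(-C,-d)$, $(x,y)=(q,r)$, and then impose the normalization $r\sp{\top}\vecone = 1$. Your write-up is in fact slightly more careful than the paper's, since you justify explicitly that the normalization is without loss of generality (any certificate is nonzero, so one may rescale), a step the paper passes over in a single phrase.
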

\begin{proof}
A variant of Farkas' lemma given in \cite[Corollary 7.1e]{Sch86} reads:
Let $A$ be a matrix and let $b$ be a vector.  
Then the system $A x \le b$ of linear inequalities has a solution $x$,
if and only if $y\sp{\top} b \ge 0$ 
for each vector $y \ge \veczero$ with $y\sp{\top} A=\veczero$.
By replacing $(A,b)$ to $(-C,-d)$ and
$(x,y)$ to $(q,r)$
and normalizing $r$ by 
$r\sp{\top} \vecone = 1$,
we obtain the statement of the lemma.
\end{proof}

\medskip

Let $B\sp{\circ} = [a, b]_{\ZZ}$ 
denote the smallest integral box containing $\decmin(S)$,
where
$a \in \ZZ\sp{n}$
and $b \in \ZZ\sp{n}$ denote
the minimum and maximum elements of $B\sp{\circ}$,
respectively.
We have 
$\| a - b \|_{\infty} \le 1$
from Proposition~\ref{PRdecminICunit}.
Using a rapidly increasing function $\varphi: \ZZ \to \RR$ 
in Theorem~\ref{THdecminRapid},
define a vector $p \in \RR\sp{n}$ by
\begin{equation} \label{diffphiGD}
p_{i} = \varphi(a_{i}+1) - \varphi(a_{i})
\qquad (i=1,2,\ldots,n),
\end{equation}
for which 
$\argmin (\varphi[-p_{i}]) = \{ a_{i}, a_{i} + 1 \}$
for $i=1,2,\ldots,n$.
Then we have
\begin{equation} \label{faceAB}
\decmin(S) \subseteq S \cap B\sp{\circ}  \subseteq S \cap \argmin (\Phi_{\rm rap}[-p]).
\end{equation}

\begin{claim}    \label{CLface2GD}
$\displaystyle \decmin(S) 
= \argmin \{  \langle p, y \rangle  \mid  y \in S \cap B\sp{\circ} \} $.
\end{claim}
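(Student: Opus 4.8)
The plan is to deduce the claim directly from Theorem~\ref{THdecminRapid}(3), which identifies $\decmin(S)$ with $\argmin(\Phi_{\rm rap} | S)$, together with the inclusion chain \eqref{faceAB}; no further machinery (and in particular not Lemma~\ref{LMfarkasvarIC}) is needed for this particular statement.

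First I would make the second inclusion in \eqref{faceAB} explicit by factorizing the unconstrained minimizer set. Since $\argmin(\varphi[-p_{i}]) = \{ a_{i}, a_{i}+1 \}$ for each $i$, separability of $\Phi_{\rm rap}$ gives $\argmin(\Phi_{\rm rap}[-p]) = [a, a+\vecone]_{\ZZ}$. Because $\| a - b \|_{\infty} \le 1$ by Proposition~\ref{PRdecminICunit}, we have $b \le a + \vecone$ and hence $B\sp{\circ} = [a,b]_{\ZZ} \subseteq [a, a+\vecone]_{\ZZ} = \argmin(\Phi_{\rm rap}[-p])$, confirming $S \cap B\sp{\circ} \subseteq \argmin(\Phi_{\rm rap}[-p])$.

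The crux is then the observation that every $y \in S \cap B\sp{\circ}$ is an unconstrained minimizer of $\Phi_{\rm rap}[-p]$, so $\Phi_{\rm rap}(y) - \langle p, y \rangle$ equals the constant minimum value, say $\mu$, that does not depend on $y$. Thus on $S \cap B\sp{\circ}$ we have $\Phi_{\rm rap}(y) = \langle p, y \rangle + \mu$, which means that minimizing $\Phi_{\rm rap}$ over $S \cap B\sp{\circ}$ is the very same problem as minimizing the linear form $\langle p, y \rangle$ over $S \cap B\sp{\circ}$, so that the two sets of optimal solutions coincide:
\[
\argmin \{ \Phi_{\rm rap}(y) \mid y \in S \cap B\sp{\circ} \}
= \argmin \{ \langle p, y \rangle \mid y \in S \cap B\sp{\circ} \}.
\]

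Finally I would identify the left-hand side with $\decmin(S)$. Since $\decmin(S) = \argmin(\Phi_{\rm rap} | S) \subseteq S \cap B\sp{\circ}$ by the first inclusion in \eqref{faceAB}, the global minimum of $\Phi_{\rm rap}$ over $S$ is already attained inside the subset $S \cap B\sp{\circ}$; consequently the minimum of $\Phi_{\rm rap}$ over $S \cap B\sp{\circ}$ equals its minimum over $S$, and a point $y \in S \cap B\sp{\circ}$ attains it precisely when $y \in \argmin(\Phi_{\rm rap} | S) = \decmin(S)$. Chaining the three displays yields the asserted identity. I do not expect a genuine obstacle here: the entire content sits in the already-established chain \eqref{faceAB}, and the only point requiring care is the bookkeeping that shrinking the feasible region from $S$ to $S \cap B\sp{\circ}$ neither creates nor destroys minimizers, which rests squarely on the containment $\decmin(S) \subseteq S \cap B\sp{\circ}$.
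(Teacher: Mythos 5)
Your proof is correct and is essentially the paper's own argument: both rest on the observation that, by \eqref{faceAB}, the function $\Phi_{\rm rap}[-p]$ is constant on $S \cap B\sp{\circ}$ (equal to its unconstrained minimum), so $\Phi_{\rm rap}$ and $\langle p, \cdot \rangle$ differ there by a constant and hence have the same minimizers over $S \cap B\sp{\circ}$, which by Theorem~\ref{THdecminRapid}(3) and the containment $\decmin(S) \subseteq S \cap B\sp{\circ}$ are exactly $\decmin(S)$. The paper phrases this pointwise (comparing a fixed $x \in \decmin(S)$ against an arbitrary $y \in S \cap B\sp{\circ}$) and leaves the factorization $\argmin (\Phi_{\rm rap}[-p]) = [a, a+\vecone]_{\ZZ}$ implicit, but these are presentational differences only.
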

\begin{proof}
Let $x \in \decmin(S)$ and $y \in S \cap B\sp{\circ}$.
By \eqref{faceAB} we have
$\Phi_{\rm rap}[-p](x) = \Phi_{\rm rap}[-p](y)$,
that is,
\[
\Phi_{\rm rap}(x) - \langle p, x \rangle 
= \Phi_{\rm rap}(y) - \langle p, y \rangle .
\]
Since 
$\Phi_{\rm rap}(x) \le \Phi_{\rm rap}(y)$
by $x \in \decmin(S)$, we have
$\langle p, x \rangle \le \langle p, y \rangle$.
Moreover, we have
\[
\langle p, x \rangle = \langle p, y \rangle
\iff
\Phi_{\rm rap}(x) = \Phi_{\rm rap}(y)
\iff
y \in \decmin(S).
\]
Hence follows
$\displaystyle \decmin(S) =  
 \argmin \{ \langle p, y \rangle  \mid y \in S \cap B\sp{\circ} \}$.
\end{proof}

\begin{figure} 
\centering
 \includegraphics[width=0.70\textwidth,clip]{./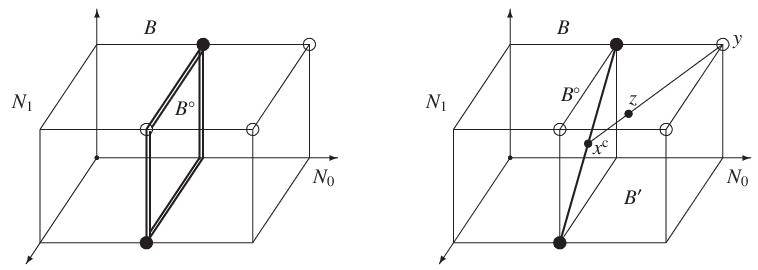}
 \caption{Definition of boxes $B\sp{\circ}$, $B$, and $B'$;
$\bullet \in \decmin(S)$, \  $\circ \in S \setminus \decmin(S)$}
 \label{FGdecminBox1}
\end{figure}

We use the following notations (see Fig.~\ref{FGdecminBox1}): 
\begin{align}
& N_{0} = \{ i \in N \mid b_{i} = a_{i} \},
\qquad
 N_{1} = \{ i \in N \mid b_{i} = a_{i} + 1 \},
\label{boxN0N1defGD}
\\ &
B  = \{ z \in \ZZ\sp{n}  \mid 
 a_{i} -1 \le z_{i} \le a_{i} + 1 \  (i \in  N_{0}), \ 
 a_{i} \le z_{i} \le b_{i} \  (i \in  N_{1})   \},
\label{boxBdefGD}
\\ &
  (S \cap B) \setminus \decmin(S) = \{ y\sp{k} \mid k=1,2,\ldots, L \},
\label{ykdefGD}
\end{align}
where \eqref{ykdefGD} means that we denote the elements of 
$(S \cap B) \setminus \decmin(S)$ by 
$y\sp{k}$ ($k=1,2,\ldots, L)$.

Fix $x\sp{\circ} \in \decmin(S)$.
We are going to modify 
$p$ to $p\sp{*}$ 
so that
\begin{equation} \label{pperturbIneq}
 \langle p\sp{*}, x\sp{\circ} \rangle \le  \langle p\sp{*}, y\sp{k} \rangle
\qquad (k=1,2,\ldots, L)
\end{equation}
holds.
We assume that the components of $p$ on $N_{0}$ 
are changed with an appropriate $q \in \RR\sp{N_{0}}$ as 
\begin{equation} \label{pperturb}
p\sp{*}_{i} =
\begin{cases}
  p_{i} + q_{i}  & (i \in N_{0}),
\\
 p_{i}  & (i \in N_{1}).
\end{cases}
\end{equation}
This definition can also be expressed as
$p\sp{*} = (p|_{N_{0}} + q, p|_{N_{1}})$,
where
$p|_{N_{0}}$ and $p|_{N_{1}}$
represent the restrictions of $p$ to $N_{0}$ and $N_{1}$,
respectively.

\begin{claim}     \label{PRface5GD}
There exists $q$ for which $p\sp{*}$ in \eqref{pperturb} satisfies \eqref{pperturbIneq}.
\end{claim}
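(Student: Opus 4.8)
The plan is to recast the desired inequalities \eqref{pperturbIneq} as a linear feasibility system in the unknown $q \in \RR\sp{N_{0}}$ and to establish its solvability via the Farkas variant in Lemma~\ref{LMfarkasvarIC}. Substituting \eqref{pperturb} into \eqref{pperturbIneq} and extending $q$ by zero on $N_{1}$, the inner product splits as $\langle p\sp{*}, y\sp{k} - x\sp{\circ}\rangle = \langle p, y\sp{k} - x\sp{\circ}\rangle + \sum_{i \in N_{0}} q_{i}(y\sp{k}_{i} - x\sp{\circ}_{i})$, so the condition $\langle p\sp{*}, y\sp{k} - x\sp{\circ}\rangle \ge 0$ becomes
\[
\sum_{i \in N_{0}} q_{i}(y\sp{k}_{i} - x\sp{\circ}_{i}) \ge -\langle p, y\sp{k} - x\sp{\circ}\rangle \qquad (k=1,2,\ldots,L),
\]
which is of the form $C q \ge d$ with $C$ the $L \times |N_{0}|$ matrix whose $k$th row is $(y\sp{k} - x\sp{\circ})|_{N_{0}}$ and with $d_{k} = -\langle p, y\sp{k} - x\sp{\circ}\rangle$. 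Thus it suffices to verify condition (a) of Lemma~\ref{LMfarkasvarIC}, which I would do by refuting condition (b).

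Suppose, for contradiction, that some $r \ge \veczero$ with $r\sp{\top}\vecone = 1$ satisfies $r\sp{\top} C = \veczero\sp{\top}$ and $r\sp{\top} d > 0$. I would form the convex combination $\bar y = \sum_{k=1}\sp{L} r_{k} y\sp{k}$, which lies in $\overline{S}$ since each $y\sp{k} \in S$. The orthogonality $r\sp{\top} C = \veczero\sp{\top}$ together with $r\sp{\top}\vecone = 1$ forces $\bar y|_{N_{0}} = x\sp{\circ}|_{N_{0}}$, and since $x\sp{\circ} \in \decmin(S) \subseteq B\sp{\circ}$ with $b_{i} = a_{i}$ for $i \in N_{0}$, this means $\bar y_{i} = a_{i}$ for every $i \in N_{0}$. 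Likewise $r\sp{\top} d > 0$ rewrites as $-\langle p, \bar y - x\sp{\circ}\rangle > 0$, i.e. $\langle p, \bar y\rangle < \langle p, x\sp{\circ}\rangle$.

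The crux is to show that $\bar y$ is in fact a convex combination of integer points of $S \cap B\sp{\circ}$, which will contradict Claim~\ref{CLface2GD}. On $N_{0}$ we have just seen $\bar y_{i} = a_{i}$, while on $N_{1}$ each $\bar y_{i}$ is a convex combination of values $y\sp{k}_{i} \in [a_{i}, b_{i}] = [a_{i}, a_{i}+1]$, so $\bar y$ lies in the real box $[a, b]$. Consequently the integral neighborhood ${\rm N}(\bar y)$ is contained in $B\sp{\circ}$: on $N_{0}$ it is the single point $\{a_{i}\}$, and on $N_{1}$ it is contained in $\{a_{i}, a_{i}+1\}$. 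By integral convexity of $S$ (condition \eqref{icsetdef1}), $\bar y \in \overline{S \cap {\rm N}(\bar y)} \subseteq \overline{S \cap B\sp{\circ}}$, so $\bar y = \sum_{j} \mu_{j} u\sp{j}$ with $u\sp{j} \in S \cap B\sp{\circ}$, $\mu_{j} > 0$, $\sum_{j}\mu_{j} = 1$. Then $\min_{j}\langle p, u\sp{j}\rangle \le \langle p, \bar y\rangle < \langle p, x\sp{\circ}\rangle$, so some $u\sp{j} \in S \cap B\sp{\circ}$ has strictly smaller $p$-value than $x\sp{\circ}$, contradicting Claim~\ref{CLface2GD}. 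Hence (b) fails, (a) holds, and the required $q$ exists.

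I expect the main obstacle to be precisely the step ${\rm N}(\bar y) \subseteq B\sp{\circ}$: one must argue that the Farkas orthogonality pins $\bar y$ to $a$ on $N_{0}$, and then that, because $B\sp{\circ}$ is single-valued on $N_{0}$ and spans only adjacent integers on $N_{1}$, the integral neighborhood of $\bar y$ cannot escape $B\sp{\circ}$. This is the only place where integral convexity of $S$ and the structure of $B\sp{\circ}$ are simultaneously invoked; everything else is routine linear algebra and the already-proved Claim~\ref{CLface2GD}.
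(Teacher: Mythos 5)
Your proof is correct and takes essentially the same route as the paper: the same reduction of \eqref{pperturbIneq} to a system $Cq \ge d$ with $c\sp{k} = (y\sp{k}-x\sp{\circ})|_{N_{0}}$ and $d_{k} = \langle p, x\sp{\circ}-y\sp{k}\rangle$, the same appeal to Lemma~\ref{LMfarkasvarIC}, the same convex combination $\bar y = \sum_{k} r_{k} y\sp{k}$ pinned to $a$ on $N_{0}$ and trapped in $[a,b]$ on $N_{1}$, and the same contradiction with Claim~\ref{CLface2GD}. The only (harmless) difference is expository: where the paper invokes $\overline{S} \cap \overline{B\sp{\circ}} = \overline{S \cap B\sp{\circ}}$ as a consequence of integral convexity, you verify the needed membership $\bar y \in \overline{S \cap B\sp{\circ}}$ directly by showing ${\rm N}(\bar y) \subseteq B\sp{\circ}$ and applying \eqref{icsetdef1}, which makes that step more self-contained.
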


\begin{proof}
For each $k=1,2,\ldots, L$, we have
\begin{align} 
 \langle p\sp{*}, x\sp{\circ} \rangle \le  \langle p\sp{*}, y\sp{k} \rangle
& \iff
 \langle p, y\sp{k} \rangle - 
 \langle p, x\sp{\circ} \rangle 
+
 \langle q, y\sp{k}|_{N_{0}} \rangle - 
 \langle q, x\sp{\circ}|_{N_{0}} \rangle 
 \ge 0
\nonumber \\ & \iff
 \langle q, (y\sp{k}  - x\sp{\circ})|_{N_{0}} \rangle  \ge
 \langle p, x\sp{\circ} - y\sp{k} \rangle
\nonumber \\ & \iff
 \langle q, c\sp{k} \rangle \ge d_{k},
\label{pperturbIneq2}
\end{align}
where
\begin{equation} \label{pperturbIneq2ckdk}
c\sp{k}  = (y\sp{k} - x\sp{\circ})|_{N_{0}},
\qquad
d_{k} = \langle p, x\sp{\circ} - y\sp{k} \rangle .
\end{equation}
Let $C$ be an $L \times |N_{0}|$ matrix
whose $k$th row is given by
$(c\sp{k})\sp{\top} \in \{ -1,0,+1 \}\sp{N_{0}}$
for $k=1,2,\ldots, L$,
and let
$d = (d_{1},d_{2}, \ldots, d_{L}) \in \RR\sp{L}$. 
Then \eqref{pperturbIneq2} is expressed as
\begin{equation} \label{pperturbIneq2mat}
 C q \ge d .
\end{equation}
By Lemma~\ref{LMfarkasvarIC}
(a variant of Farkas' lemma),
the inequality system
$C q \ge d$ has a solution $q$
if and only if there exists no 
$r \in \RR\sp{L}$ satisfying \eqref{farkasD}.

Let 
$r = (r_{1},r_{2}, \ldots, r_{L})$
be any vector
that satisfies  
the conditions
$r\sp{\top} C = \veczero\sp{\top}$, 
$r \ge \veczero$,
$r\sp{\top} \vecone = 1$
in \eqref{farkasD},
excepting the inequality condition $r\sp{\top} d > 0$.
Define
\begin{equation} \label{farkasDz}
z = \sum_{k=1}\sp{L} r_{k} y\sp{k}.
\end{equation}
For $i \in N_{0}$, we have
\[
 z_{i} = \sum_{k=1}\sp{L} r_{k} y\sp{k}_{i}
 =  \sum_{k=1}\sp{L} r_{k} x\sp{\circ}_{i} = x\sp{\circ}_{i} =  a_{i}
\]
since
$r\sp{\top} C = \veczero\sp{\top}$
and $x\sp{\circ} \in B\sp{\circ}$.
For $i \in N_{1}$, we have
\[
a_{i} \le z_{i}  \le b_{i}
\]
from 
the definition \eqref{ykdefGD} of $B$,
$y\sp{k} \in B$,
and
$a_{i} \le y\sp{k}_{i} \le b_{i}$.
Hence, $z \in \overline{B\sp{\circ}}$.
In addition, 
we have $z \in \overline{S}$
since
$y\sp{k} \in S$ ($k=1,2,\ldots, L$).
A combination of these two 
as well as the integral convexity of $S$
implies that
$z \in \overline{S} \cap \overline{B\sp{\circ}}= \overline{S \cap B\sp{\circ}}$.

We now turn to the remaining inequality condition in \eqref{farkasD}.
Note that
\[
 r\sp{\top} d 
= \sum_{k=1}\sp{L} r_{k} 
 \langle p, x\sp{\circ} - y\sp{k} \rangle
= \sum_{k=1}\sp{L} r_{k} 
( \langle p, x\sp{\circ} \rangle
 - \langle p, y\sp{k} \rangle )
= \langle p, x\sp{\circ} \rangle
 - \langle p, z \rangle .
\]
Here we have
$\langle p, x\sp{\circ} \rangle
 - \langle p, z \rangle \le 0$
by Claim~\ref{CLface2GD},
since
$x\sp{\circ} \in \decmin(S)$ and
$z \in \overline{S \cap B\sp{\circ}}$.
Thus we have shown that there exists no $r$ satisfying all conditions 
in \eqref{farkasD}.
This implies, by Lemma~\ref{LMfarkasvarIC}, that
there exists $q$ satisfying $C q \ge d$,
which is equivalent to saying that 
there exists $q$ satisfying \eqref{pperturbIneq}.
\end{proof}

\begin{claim}     \label{CLface2GDstar}
$\displaystyle \decmin(S) 
=  \argmin \{  \langle p\sp{*}, y \rangle \mid  y \in S \cap B\sp{\circ} \}$.
\end{claim}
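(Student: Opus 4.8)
The plan is to derive Claim~\ref{CLface2GDstar} directly from Claim~\ref{CLface2GD}, exploiting the fact that the perturbation $q$ is supported on $N_{0}$, while every element of the box $B\sp{\circ}$ is frozen to the value $a_{i}=b_{i}$ in each coordinate $i\in N_{0}$. First I would record the elementary observation that for $i\in N_{0}$ we have $b_{i}=a_{i}$ by \eqref{boxN0N1defGD}, so that $y_{i}=a_{i}$ holds for every $y\in B\sp{\circ}=[a,b]_{\ZZ}$; thus the coordinates indexed by $N_{0}$ carry no freedom inside $B\sp{\circ}$.

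Next, using the definition \eqref{pperturb} of $p\sp{*}$, which sets $p\sp{*}_{i}=p_{i}+q_{i}$ on $N_{0}$ and $p\sp{*}_{i}=p_{i}$ on $N_{1}$, I would compute, for an arbitrary $y\in S\cap B\sp{\circ}$,
\[
\langle p\sp{*}, y \rangle - \langle p, y \rangle
 = \langle q, y|_{N_{0}} \rangle
 = \langle q, a|_{N_{0}} \rangle ,
\]
where the last equality uses $y|_{N_{0}}=a|_{N_{0}}$. The crucial point is that the right-hand side is a constant, independent of the particular $y\in S\cap B\sp{\circ}$.

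Consequently the two linear objectives $\langle p\sp{*},\cdot\rangle$ and $\langle p,\cdot\rangle$ differ by an additive constant on all of $S\cap B\sp{\circ}$, hence attain their minima over $S\cap B\sp{\circ}$ at exactly the same points, i.e.,
\[
\argmin \{ \langle p\sp{*}, y\rangle \mid y \in S \cap B\sp{\circ} \}
= \argmin \{ \langle p, y\rangle \mid y \in S \cap B\sp{\circ} \} .
\]
Invoking Claim~\ref{CLface2GD}, the right-hand side equals $\decmin(S)$, which yields the assertion.

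I do not expect a genuine obstacle here: the content of the claim is that replacing $p$ by $p\sp{*}$ does not disturb the characterization over the small box $B\sp{\circ}$, and this is automatic because the modification lives precisely on the coordinates that are frozen inside $B\sp{\circ}$. The substantive work---establishing the inequalities \eqref{pperturbIneq}---concerns the enlarged box $B$ and was already carried out in Claim~\ref{PRface5GD}; it is needed not for the present claim but for the subsequent step of promoting $p\sp{*}$ to a supporting direction that cuts out the desired face $F$ of the full convex hull $\overline{S}$. The only point requiring a moment's care is to state explicitly that $N_{0}$ is exactly where $a$ and $b$ coincide, so that $q$ has no effect on the objective value within $B\sp{\circ}$.
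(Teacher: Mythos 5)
Your proposal is correct and follows exactly the same route as the paper's own proof: observe that $y|_{N_{0}}=a|_{N_{0}}$ for every $y\in S\cap B\sp{\circ}$, deduce that $\langle p\sp{*},y\rangle$ and $\langle p,y\rangle$ differ by the constant $\langle q,a|_{N_{0}}\rangle$ on $S\cap B\sp{\circ}$, so their minimizer sets coincide, and then invoke Claim~\ref{CLface2GD}. Your closing remark that the harder inequalities \eqref{pperturbIneq} are needed only for the later claims (over the larger box $B$ and over all of $S$), not here, is also an accurate reading of the proof's architecture.
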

\begin{proof}
Let $y$ be any vector in $S \cap B\sp{\circ}$.
Since $y|_{N_{0}} = a|_{N_{0}}$
and $p\sp{*}$ is given in the form of \eqref{pperturb}, 
we have
\[
\langle p\sp{*}, y \rangle
= \langle p, y \rangle + \langle q, y|_{N_{0}}  \rangle 
= \langle p, y \rangle + \langle q, a|_{N_{0}} \rangle ,
\]
from which
\[
\argmin \{  \langle p\sp{*}, y \rangle  \mid y \in S \cap B\sp{\circ} \} 
=\argmin \{  \langle p, y \rangle  \mid y \in S \cap B\sp{\circ} \} .
\]
By Claim~\ref{CLface2GD},
the right-hand side coincides with $\decmin(S)$. 
\end{proof}

\begin{claim}     \label{CLface4GD}
$\displaystyle \decmin(S)  \subseteq 
\argmin \{  \langle p\sp{*}, y \rangle \mid  y \in S \cap B \}$.
\end{claim}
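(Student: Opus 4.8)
The goal of Claim~\ref{CLface4GD} is to extend the optimality of $\decmin(S)$ from the smaller feasible set $S \cap B\sp{\circ}$ (established in Claim~\ref{CLface2GDstar}) to the larger set $S \cap B$. Since $B\sp{\circ} \subseteq B$, the inclusion $S \cap B\sp{\circ} \subseteq S \cap B$ holds, so we already know from Claim~\ref{CLface2GDstar} that every $x\sp{\circ} \in \decmin(S)$ minimizes $\langle p\sp{*}, \cdot \rangle$ over the subset $S \cap B\sp{\circ}$, and that all elements of $\decmin(S)$ attain the same value of $\langle p\sp{*}, \cdot \rangle$. The plan is to show that no point of $(S \cap B) \setminus (S \cap B\sp{\circ})$ can have a strictly smaller inner-product value than this common minimum. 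The decomposition $S \cap B = (S \cap B\sp{\circ}) \cup \big( (S \cap B) \setminus \decmin(S)\big)$ is convenient here: the elements of the second part are precisely the $y\sp{k}$ enumerated in \eqref{ykdefGD} together with whatever points of $S \cap B\sp{\circ}$ lie outside $\decmin(S)$.

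First I would fix $x\sp{\circ} \in \decmin(S)$ and split any $y \in S \cap B$ into two cases. If $y \in S \cap B\sp{\circ}$, then Claim~\ref{CLface2GDstar} immediately gives $\langle p\sp{*}, x\sp{\circ} \rangle \le \langle p\sp{*}, y \rangle$, so there is nothing to prove. The substantive case is $y \in (S \cap B) \setminus (S \cap B\sp{\circ})$. Such a $y$ cannot lie in $\decmin(S)$ (since $\decmin(S) \subseteq B\sp{\circ}$ by the definition of $B\sp{\circ}$ as the smallest box containing $\decmin(S)$), so $y$ must appear among the $y\sp{k}$ in the enumeration \eqref{ykdefGD}. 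But Claim~\ref{PRface5GD} furnishes exactly the inequality $\langle p\sp{*}, x\sp{\circ} \rangle \le \langle p\sp{*}, y\sp{k} \rangle$ for every such index $k$ via \eqref{pperturbIneq}. Hence $\langle p\sp{*}, x\sp{\circ} \rangle \le \langle p\sp{*}, y \rangle$ in this case as well.

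Combining the two cases shows that $x\sp{\circ}$ minimizes $\langle p\sp{*}, \cdot \rangle$ over all of $S \cap B$, which is precisely the assertion $\decmin(S) \subseteq \argmin \{ \langle p\sp{*}, y \rangle \mid y \in S \cap B \}$. The one point requiring care is the enumeration bookkeeping: I must verify that every $y \in (S \cap B) \setminus (S \cap B\sp{\circ})$ really is one of the $y\sp{k}$. This follows because $(S \cap B) \setminus \decmin(S)$ is exactly $\{ y\sp{k} \mid k=1,\ldots,L\}$ by \eqref{ykdefGD}, and any $y \in (S \cap B) \setminus (S \cap B\sp{\circ})$ satisfies $y \notin B\sp{\circ}$, hence $y \notin \decmin(S)$, placing it in that enumerated set.

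The main obstacle is conceptual rather than technical: the whole construction of $p\sp{*}$ in \eqref{pperturb}, together with the Farkas-based existence argument in Claim~\ref{PRface5GD}, was engineered precisely to make this claim hold. Thus the real work was already discharged in proving Claim~\ref{PRface5GD}; here I am only assembling Claims~\ref{CLface2GDstar} and~\ref{PRface5GD} via the case split. I expect the proof to be short, with the only delicate step being the correct identification of the points outside $S \cap B\sp{\circ}$ with the list $\{y\sp{k}\}$ so that \eqref{pperturbIneq} applies. Note that this claim gives only the inclusion ``$\subseteq$''; the reverse inclusion (yielding equality and thereby exhibiting $\decmin(S)$ as a face of $S \cap B$, and ultimately the face $F$ of $\overline{S}$) presumably requires a further argument that $S \cap B$ and $S \cap B\sp{\circ}$ share the same minimizers, which I would expect to be handled in a subsequent claim.
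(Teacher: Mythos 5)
Your proof is correct and takes essentially the same route as the paper: combine the inequality \eqref{pperturbIneq} (which covers the enumerated points $y\sp{k}$) with Claim~\ref{CLface2GDstar} to conclude that the minimum of $\langle p\sp{*}, \cdot \rangle$ over $S \cap B$ coincides with its minimum over $S \cap B\sp{\circ}$, which is attained on all of $\decmin(S)$. Your explicit case split ($y \in S \cap B\sp{\circ}$ versus $y \notin B\sp{\circ}$, hence $y = y\sp{k}$ for some $k$) merely spells out the bookkeeping that the paper leaves implicit when it invokes \eqref{pperturbIneq} for ``any $y \in S \cap B$.''
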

\begin{proof}
For any $y \in S \cap B$,
we have
$\langle p\sp{*}, x\sp{\circ} \rangle \le \langle p\sp{*}, y \rangle $
by \eqref{pperturbIneq},
whereas
Claim~\ref{CLface2GDstar} shows
$\displaystyle 
\langle p\sp{*}, x\sp{\circ} \rangle = 
\min \{  \langle p\sp{*}, y \rangle  \mid y \in S \cap B\sp{\circ} \}$.
By
$S \cap B \supseteq S \cap B\sp{\circ}$
and Claim~\ref{CLface2GDstar},
we obtain
\[
\argmin \{ \langle p\sp{*}, y \rangle \mid y \in S \cap B \}  
\supseteq
 \argmin \{ \langle p\sp{*}, y \rangle \mid y \in S \cap B\sp{\circ} \} 
= \decmin(S).
\]
\end{proof}

\begin{claim}     \label{CLface3GD}
$\displaystyle \decmin(S) \subseteq 
 \argmin \{ \langle p\sp{*}, y \rangle \mid y \in S \}$.
\end{claim}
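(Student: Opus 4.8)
The plan is to upgrade the minimality established over $S\cap B$ in Claim~\ref{CLface4GD} to minimality over all of $S$ by a local-to-global argument powered by integral convexity. Fix $x\sp{\circ}\in\decmin(S)$ and put $\mu=\langle p\sp{*},x\sp{\circ}\rangle$; by Claims~\ref{CLface2GDstar} and~\ref{CLface4GD} we have $\mu=\min\{\langle p\sp{*},y\rangle\mid y\in S\cap B\}$. I would argue by contradiction: suppose some $w\in S$ satisfies $\langle p\sp{*},w\rangle<\mu$, and consider the segment $z(t)=(1-t)x\sp{\circ}+t\,w$ for small $t>0$, for which $\langle p\sp{*},z(t)\rangle<\mu$ by linearity. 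Since $S$ is integrally convex, $z(t)\in\overline{S\cap{\rm N}(z(t))}$, so $z(t)$ is a convex combination $\sum_{k}\lambda_{k}z\sp{k}$ of integer points of $S$ lying in ${\rm N}(z(t))$, each within $\ell_{\infty}$-distance $1$ of $x\sp{\circ}$ once $t$ is small. Averaging gives at least one such point $z\sp{*}\in S$ with $\langle p\sp{*},z\sp{*}\rangle<\mu$ and $\|z\sp{*}-x\sp{\circ}\|_{\infty}\le 1$. The whole proof then reduces to showing $z\sp{*}\in B$, since this contradicts $\mu=\min\{\langle p\sp{*},y\rangle\mid y\in S\cap B\}$.

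To verify $z\sp{*}\in B$ I would inspect coordinates through the partition $N=N_{0}\cup N_{1}$. For $i\in N_{0}$ we have $x\sp{\circ}_{i}=a_{i}$, and every integer within distance $1$, namely $a_{i}-1,a_{i},a_{i}+1$, lies in the admissible range $[a_{i}-1,a_{i}+1]$ of $B$, so these coordinates never leave $B$. For $i\in N_{1}$ we have $x\sp{\circ}_{i}\in\{a_{i},a_{i}+1\}$ while $B$ permits only $[a_{i},a_{i}+1]$, so $z\sp{*}$ can escape $B$ in exactly two ways: an \emph{upward} excursion $z\sp{*}_{i}=a_{i}+2$ when $x\sp{\circ}_{i}=a_{i}+1$, or a \emph{downward} excursion $z\sp{*}_{i}=a_{i}-1$ when $x\sp{\circ}_{i}=a_{i}$. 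Here the sign of $p\sp{*}$ on $N_{1}$ is decisive: since $p\sp{*}_{i}=p_{i}=\varphi(a_{i}+1)-\varphi(a_{i})>0$, an upward excursion can only raise $\langle p\sp{*},\cdot\rangle$, so such coordinates do not contribute to the descent and are controlled together with the exchange argument below.

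The main obstacle is the downward excursion: an index $i_{0}\in N_{1}$ with $x\sp{\circ}_{i_{0}}=a_{i_{0}}$ and $z\sp{*}_{i_{0}}=a_{i_{0}}-1$. Because $p\sp{*}_{i_{0}}>0$, decreasing this coordinate genuinely lowers $\langle p\sp{*},\cdot\rangle$, so its exclusion cannot rest on the objective and must invoke dec-minimality. The facts I would exploit are that $x\sp{\circ}$ is componentwise minimal in $S$ (any $y\in S$ with $y\le x\sp{\circ}$ and $y\ne x\sp{\circ}$ satisfies $y<_{\rm dec}x\sp{\circ}$, since sorting is monotone componentwise, contradicting $x\sp{\circ}\in\decmin(S)$) and that $a_{i_{0}}$ is, by the minimality of the box $B\sp{\circ}$, the smallest value attained on coordinate $i_{0}$ over $\decmin(S)$. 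I would rule out the downward excursion by a local exchange/averaging argument in the spirit of Claims~\ref{CLbox1}--\ref{CLbox2C}: working with the convex combination $z(t)=\sum_{k}\lambda_{k}z\sp{k}$ and applying Lemma~\ref{LMcube} to the half-integral coordinates, one extracts from the $z\sp{k}$ a point of $S$ that is dec-smaller than $x\sp{\circ}$, a contradiction. Making this exchange precise—tracking how pushing coordinate $i_{0}$ below $a_{i_{0}}$ interacts with the remaining coordinates in the sorted order—is the delicate technical heart, and I expect it to be the hard part. Once downward excursions are excluded we obtain $z\sp{*}\in S\cap B$ with $\langle p\sp{*},z\sp{*}\rangle<\mu$, the desired contradiction; hence $\langle p\sp{*},y\rangle\ge\mu$ for every $y\in S$, that is, $\decmin(S)\subseteq\argmin\{\langle p\sp{*},y\rangle\mid y\in S\}$.
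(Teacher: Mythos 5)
There is a genuine gap, and it sits exactly where you say you expect ``the hard part'' to be. Your reduction---run a segment from an anchor point toward a hypothetical $w\in S$ with $\langle p^{*},w\rangle<\mu$, invoke integral convexity to write the perturbed point as a convex combination of points of $S\cap{\rm N}(z(t))$, and extract one point $z^{*}$ with $\langle p^{*},z^{*}\rangle<\mu$---is sound. But because you anchor at a single dec-min element $x^{\circ}$, the points of ${\rm N}(z(t))$ need not lie in $B$: on a coordinate $i\in N_{1}$ the anchor sits on the boundary of $[a_{i},a_{i}+1]$, so $z^{*}$ can take the value $a_{i}-1$ or $a_{i}+2$. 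You recognize this, but the two devices you propose for excluding these excursions do not constitute a proof. The upward case is not disposed of by $p^{*}_{i}>0$: you cannot lower a single coordinate of $z^{*}$ and stay in $S$, and nothing prevents every point of the combination with value $<\mu$ from having an upward excursion. For the downward case you appeal to an exchange argument ``in the spirit of Claims~\ref{CLbox1}--\ref{CLbox2C}'' via Lemma~\ref{LMcube}; but Lemma~\ref{LMcube} requires the relevant coordinates to equal $1/2$, whereas your $z(t)$ has fractional parts $t$ and $1-t$ (there are no half-integral coordinates in your construction), so the lemma does not apply as stated, and you explicitly leave this step unproven. A plan whose acknowledged ``technical heart'' is missing is a gap, not a proof.

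The paper closes precisely this hole by a different choice of anchor: the barycenter $x^{\rm c}$ of $\decmin(S)$ rather than a single element. Because $B^{\circ}$ is the \emph{smallest} box containing $\decmin(S)$, every coordinate $i\in N_{1}$ attains both values $a_{i}$ and $a_{i}+1$ on $\decmin(S)$, so $a_{i}<x^{\rm c}_{i}<a_{i}+1$ for $i\in N_{1}$, while $x^{\rm c}_{i}=a_{i}$ for $i\in N_{0}$; hence $x^{\rm c}$ is an interior point of $\overline{B}$. For small $\varepsilon$ the point $z=(1-\varepsilon)x^{\rm c}+\varepsilon y$ is then still interior to $\overline{B}$, so ${\rm N}(z)$ is contained in a unit box $B'\subseteq B$, and every point of the resulting convex combination lies in $S\cap B$, where Claim~\ref{CLface4GD} applies; no excursion can occur at all. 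Two further ingredients make the linearity argument run: all elements of $\decmin(S)$ share the same value $\beta$ of $\langle p^{*},\cdot\rangle$ (Claim~\ref{CLface2GDstar}), so $\langle p^{*},x^{\rm c}\rangle=\beta$, and then $\beta\le\langle p^{*},z\rangle=(1-\varepsilon)\beta+\varepsilon\langle p^{*},y\rangle$ yields $\langle p^{*},y\rangle\ge\beta$ directly, with no contradiction needed. If you want to salvage your write-up, replacing $x^{\circ}$ by the barycenter and verifying its interiority from the minimality of $B^{\circ}$ is the one missing idea; your excursion analysis can then be deleted entirely.
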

\begin{proof}
Let
$\beta =  \min \{  \langle p\sp{*}, x \rangle  \mid x \in \decmin(S) \}$
and let $x\sp{\rm c}$ be the barycenter of $\decmin(S)$, which is defined by
\[
x\sp{\rm c} = \frac{1}{|\decmin(S)|} \sum_{x \in \decmin(S)} x
\]
(see Fig.~\ref{FGdecminBox1}(right)).
We have
$x\sp{\rm c} \in \overline{S} \cap \overline{B\sp{\circ}}= \overline{S \cap B\sp{\circ}}$
by the integral convexity of $S$.

Take any $y \in S$.
For a sufficiently small $\varepsilon > 0$, define
$z = (1 - \varepsilon) x\sp{\rm c} + \varepsilon y$.
Since
$x\sp{\rm c}$ is an interior point of $\overline{B}$
and  
$\varepsilon > 0$ is sufficiently small,
$z$ is also an interior point of $\overline{B}$.
Hence  
$z$ is contained in the convex hull of 
$B' = [ \lfloor z \rfloor,  \lfloor z \rfloor + \vecone ]_{\ZZ}$,
which is a unit box contained in $B$.
This implies that
${\rm N}(z) \subseteq B'$
and hence
\[
 z \in \overline{S \cap {\rm N}(z)} \subseteq  \overline{S \cap B'}
\]
by the integral convexity of $S$.
We can represent $z$ as a convex combination of points in $S \cap B'$ as
\[
z = \sum_{k=1}\sp{K} \lambda_{k} x\sp{k},
\quad
x\sp{k} \in S \cap B',
\quad \sum_{k=1}\sp{K} \lambda_{k}  = 1,
\quad \lambda_{k} > 0 \ \ (k=1,2,\ldots, K).
\]
Since
$\min \{  \langle p\sp{*}, y \rangle  \mid y \in S \cap B' \} \ge \beta$
by Claim \ref{CLface4GD},
we have
$\langle p\sp{*}, x\sp{k} \rangle \ge \beta$
for all $k$.
Thus we obtain
$\langle p\sp{*}, z \rangle \ge \beta$,
while
$\langle p\sp{*}, z \rangle 
=\langle p\sp{*}, (1 - \varepsilon) x\sp{\rm c} + \varepsilon y\rangle
= (1 - \varepsilon) \beta + \varepsilon \langle p\sp{*}, y \rangle $.
Therefore
$\langle p\sp{*}, y \rangle \ge \beta$,
from which the claim follows.
\end{proof}

Let 
\begin{equation} \label{faceFdef}
 F= \argmin \{   \langle p\sp{*}, y \rangle  \mid y \in \overline{S} \}
 = \{ y \in \overline{S} \mid  \langle p\sp{*}, y \rangle = \beta  \},
\end{equation}
which is a face of $\overline{S}$.
It follows from Claims \ref{CLface2GDstar} and \ref{CLface3GD} that
\begin{align}
\decmin(S) 
& = 
\argmin_{ y \in S \cap B\sp{\circ}} ( \langle p\sp{*}, y \rangle )
= \argmin_{y \in S} ( \langle p\sp{*}, y \rangle ) \cap B\sp{\circ}
= F \cap B\sp{\circ}.
 \label{fnsumargminIC4GD}
\end{align}

As a summary of the above argument we obtain the following.
\begin{proposition}    \label{PRface1GD}
Let $S$ be an integrally convex set admitting a dec-min element.
Then $\decmin(S) = F \cap B\sp{\circ}$,
where
$F$ is a face of $\overline{S}$ given by \eqref{faceFdef}
and 
$B\sp{\circ}$ is the smallest integral box containing $\decmin(S)$.
\end{proposition}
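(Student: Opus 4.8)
The plan is to read the proposition off directly from the construction carried out above, whose substantive content is already packaged in the chain of identities \eqref{fnsumargminIC4GD} together with the definition \eqref{faceFdef} of $F$. What remains is bookkeeping: to verify that the object $F$ defined there genuinely is a face of $\overline{S}$, that $B^{\circ}$ is the smallest integral box (true by its very definition), and that the two inclusions between $\decmin(S)$ and $F \cap B^{\circ}$ close up to an equality.

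First I would record that $\langle p^{*}, \cdot \rangle$ is constant on $\decmin(S)$. Indeed, Claim~\ref{CLface2GDstar} identifies $\decmin(S)$ with $\argmin \{ \langle p^{*}, y \rangle \mid y \in S \cap B^{\circ} \}$, so every dec-min element attains the common value $\beta = \min \{ \langle p^{*}, x \rangle \mid x \in \decmin(S) \}$. Next, Claim~\ref{CLface3GD} upgrades this to the global statement that $\beta = \min \{ \langle p^{*}, y \rangle \mid y \in S \}$; since $\langle p^{*}, \cdot \rangle$ is linear and $\overline{S}$ is the convex hull of $S$, the same value $\beta$ is the minimum of $\langle p^{*}, \cdot \rangle$ over all of $\overline{S}$. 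Consequently $F = \{ y \in \overline{S} \mid \langle p^{*}, y \rangle = \beta \}$ is exactly the minimizer set of a linear functional over the polyhedron $\overline{S}$, i.e.\ a genuine face, which justifies \eqref{faceFdef}.

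The equality $\decmin(S) = F \cap B^{\circ}$ then follows by a two-sided inclusion. For $\subseteq$: any $x \in \decmin(S)$ lies in $B^{\circ}$ by the definition of the smallest containing box, and satisfies $\langle p^{*}, x \rangle = \beta$ with $x \in S \subseteq \overline{S}$, hence $x \in F$. For $\supseteq$: take $y \in F \cap B^{\circ}$; here I would invoke the defining property of integral convexity in the form $\overline{S} \cap \ZZ^{n} = S$, which follows from \eqref{icsetdef1} because ${\rm N}(y) = \{ y \}$ for integral $y$. Since $y$ is an integer point of $F \subseteq \overline{S}$ lying in the integral box $B^{\circ}$, this yields $y \in S \cap B^{\circ}$ with $\langle p^{*}, y \rangle = \beta$, so $y$ attains the minimum in Claim~\ref{CLface2GDstar} and therefore $y \in \decmin(S)$.

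The genuinely hard work is not in this final assembly but upstream: constructing the single perturbed functional $p^{*}$ that simultaneously minimizes over the whole of $S$ exactly on $\decmin(S)$ while respecting the box $B^{\circ}$. That is where Farkas' lemma (Claim~\ref{PRface5GD}) and the local-convex-hull arguments (Claims~\ref{CLface4GD} and \ref{CLface3GD}) do the heavy lifting. The main obstacle I anticipate, were one to attempt the consolidation naively, is the inclusion $F \cap B^{\circ} \subseteq \decmin(S)$: it silently relies on $\overline{S} \cap \ZZ^{n} = S$, so integral convexity of $S$ is indispensable here---precisely as Example~\ref{EXdecminNICunit} shows, dropping it makes the box bound, and hence the whole representation, break down.
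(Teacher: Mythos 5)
Your proof is correct and takes essentially the same route as the paper: the paper likewise obtains Proposition~\ref{PRface1GD} as the final assembly of Claims~\ref{CLface2GDstar} and \ref{CLface3GD} through the chain of identities \eqref{fnsumargminIC4GD}, with $F$ defined by \eqref{faceFdef}. Your only addition is to spell out the inclusion $F \cap B\sp{\circ} \subseteq \decmin(S)$ via $\overline{S} \cap \ZZ\sp{n} = S$ (a consequence of \eqref{icsetdef1}), a step the paper leaves implicit in the last equality of \eqref{fnsumargminIC4GD}.
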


By Proposition~\ref{PRdecminICunit},
$B\sp{\circ}$ is a unit box (having the $L_{\infty}$-diameter bounded by 1).
Thus we have completed an alternative proof of Theorem~\ref{THdecminICunit}
by elementary tools.

\begin{example} \rm \label{EXdecminIC4faceprf}
We illustrate the above argument for a simple example.

\begin{enumerate}
\item
Consider an integrally convex set
$S= \{ 
(2,0,0,0), 
(1,1,0,1), 
(1,0,1,1), 
(0,1,1,2)
 \}$.
The four points lie on a two-dimensional plane
in the four-dimensional space,
and they are actually the vertices of a parallelogram. 
We have
\[
\decmin(S) 
= \{ (1,1,0,1), (1,0,1,1)  \}= \{ x\sp{1}, x\sp{2} \},
\]
where $x\sp{1} = (1,1,0,1)$, $x\sp{2} =(1,0,1,1)$.
The minimal cube $B\sp{\circ}$ containing $\decmin(S)$
is given by
\[
B\sp{\circ} 
= [a, b]_{\ZZ} 
= [(1,0,0,1), (1,1,1,1)]_{\ZZ} 
= \{ 1 \} \times \{ 0,1 \}\sp{2} \times \{ 1 \}
\]
with $a = (1,0,0,1)$ and $b=(1,1,1,1)$.
For 
\eqref{boxN0N1defGD},
\eqref{boxBdefGD},
\eqref{ykdefGD}, we have
\begin{align*}
& N_{0} = \{ 1, 4 \},  \quad
N_{1} = \{ 2, 3 \},
\\
& B = [(0,0,0,0), (2,1,1,2)]_{\ZZ} 
= \{ 0,1,2 \} \times \{ 0,1 \}\sp{2} \times \{ 0,1,2 \},
\\ &
(S \cap B) \setminus \decmin(S)  
 = \{ (2,0,0,0), (0,1,1,2) \}  = \{ y\sp{1}, y\sp{2} \},
\end{align*}
where $ y\sp{1}= (2,0,0,0)$, $y\sp{2} = (0,1,1,2)$.

\item
By choosing $\varphi(k) = 10\sp{k}$, we have
\begin{align*}
& \Phi_{\rm rap}((2,0,0,0)) = \varphi(2) + 3 \varphi(0) = 103,
\\ &
\Phi_{\rm rap}((1,1,0,1))= \Phi_{\rm rap}((1,0,1,1))
 = 3 \varphi(1) + \varphi(0) = 31,
\\ &
\Phi_{\rm rap}((0,1,1,2)) = \varphi(2) + 2 \varphi(1) + \varphi(0) = 121. 
\end{align*}
We have $\decmin(S) =\argmin (\Phi_{\rm rap} | S)$
as in Theorem~\ref{THdecminRapid}(3).

\item
For $p_{i} = \varphi(a_{i}+1) - \varphi(a_{i})$
in \eqref{diffphiGD}, we have
\[
p_{1} = p_{4} = \varphi(2) - \varphi(1) = 90,
\qquad
p_{2} = p_{3} = \varphi(1) - \varphi(0) = 9,
\]
that is, $p =(90, 9, 9, 90)$. For this $p$ we have
\begin{align*}
& \Phi_{\rm rap}[-p]((2,0,0,0)) = 103 -2 p_{1} = 103- 180 = -77,
\\ &
\Phi_{\rm rap}[-p]((1,1,0,1))
 = 31 - (p_{1}+p_{2}+p_{4}) = 31 - 189 = -158,
\\ &
\Phi_{\rm rap}[-p]((1,0,1,1))
 = 31 - (p_{1}+p_{3}+p_{4}) = 31 - 189 = -158,
\\ &
\Phi_{\rm rap}[-p]((0,1,1,2)) 
 = 121 - (p_{2}+p_{3}+2 p_{4}) = 121 -  198 = -77,
\end{align*}
from which
\[
S \cap \argmin (\Phi_{\rm rap}[-p]) = \{ (1,1,0,1), (1,0,1,1)  \} .
\]
We thus have equality in the inclusion relation
$S \cap B\sp{\circ} \subseteq S \cap \argmin (\Phi_{\rm rap}[-p])$
in \eqref{faceAB}.

\item
The inner product
$\langle p, x \rangle$
takes the following values 
for $x \in S$:
\[
\langle p, x\sp{1} \rangle = \langle p, x\sp{2} \rangle = 189,
\quad 
\langle p, y\sp{1} \rangle = 180,
\quad 
\langle p, y\sp{2} \rangle = 198.
\]
The elements of $\decmin(S) = \{ x\sp{1}, x\sp{2} \}$
do not minimize $\langle p, x \rangle$ over $S$.
In the following we modify $p$ to $p\sp{*}$ 
so that
the elements of $\decmin(S)$
minimize $\langle p\sp{*}, x \rangle$.

\item
We consider $C q \ge d$ in \eqref{pperturbIneq2mat}
with the choice of $x\sp{\circ} = (1,1,0,1) \ (=x\sp{1})$.
According to \eqref{pperturbIneq2ckdk} we have
\begin{align*}
c\sp{1}  &= (y\sp{1} - x\sp{\circ})|_{N_{0}} = (2,0)-(1,1) = (1,-1), 
\\
c\sp{2}  &= (y\sp{2} - x\sp{\circ})|_{N_{0}} = (0,2)-(1,1) = (-1,1), 
\\
d_{1} &= \langle p, x\sp{\circ} - y\sp{1} \rangle 
 = \langle p, (-1,1,0,1) \rangle = 9,
\\
d_{2} &= \langle p, x\sp{\circ} - y\sp{2} \rangle 
 = \langle p, (1,0,-1,-1) \rangle = -9,
\end{align*}
with which
\[
C q \ge d 
\ \Leftrightarrow \ 
\left[
\begin{array}{rr}
     1 & -1  \\
    -1 &  1 \\
\end{array} 
\right] 
\left[
\begin{array}{c}
     q_{1}  \\
     q_{2} \\
\end{array} 
\right]
\ge
\left[
\begin{array}{r}
     9  \\
    -9 \\
\end{array} 
\right] 
\ \Leftrightarrow \ 
q_{1} -  q_{2} = 9
\ \Leftrightarrow \ 
(q_{1}, q_{2}) = (\alpha + 9, \alpha).
\]
That is, $p\sp{*} = (99 + \alpha, 9, 9, 90 + \alpha)$
with any $\alpha \in \RR$.

\item
On noting
\[
 \langle p\sp{*}, x\sp{1} \rangle 
= \langle p\sp{*}, x\sp{2} \rangle 
= \langle p\sp{*}, y\sp{1} \rangle 
= \langle p\sp{*}, y\sp{2} \rangle = 198 + 2 \alpha 
\]
and recalling 
$\decmin(S) = \{ x\sp{1}, x\sp{2} \}$, we can verify the claims:

Claim~\ref{CLface2GDstar}:
$\displaystyle \decmin(S) 
=  \argmin_{y \in S \cap B\sp{\circ} }( \langle p\sp{*}, y \rangle )$
$= S \cap B\sp{\circ} = \{ x\sp{1}, x\sp{2} \}$.

Claim~\ref{CLface4GD}:
$\displaystyle \decmin(S) 
 \subseteq \argmin_{y \in S \cap B} ( \langle p\sp{*}, y \rangle )$
$= S \cap B = \{ x\sp{1}, x\sp{2}, y\sp{1}, y\sp{2} \}$.

Claim~\ref{CLface3GD}:
$\displaystyle \decmin(S) \subseteq 
 \argmin_{y \in S} ( \langle p\sp{*}, y \rangle )$
$= S = \{ x\sp{1}, x\sp{2}, y\sp{1}, y\sp{2} \}$.

\item
The face $F$
in Proposition~\ref{PRface1GD}
is given by
$\displaystyle F= \argmin \{  \langle p\sp{*}, y \rangle  \mid y \in \overline{S} \}$.
In this particular example, we have
$F = \overline{S}$.
We thus obtain the desired representation 
$\decmin(S) = F \cap B\sp{\circ}$.
\finbox
\end{enumerate}
\end{example}


\bigskip


\noindent {\bf Acknowledgement}.
This work was supported by JSPS/MEXT KAKENHI JP23K11001 and JP21H04979, and
by JST ERATO Grant Number JPMJER2301, Japan.





\end{document}